\documentclass[11pt]{article}%
\usepackage{graphicx}
\usepackage{amsmath}
\usepackage{amssymb}
\usepackage{float}
\usepackage{color}
\usepackage{multirow}
\usepackage{array}
\usepackage{epstopdf}
\usepackage{titlesec}
\usepackage{amsfonts}%
\setcounter{MaxMatrixCols}{30}
%TCIDATA{OutputFilter=latex2.dll}
%TCIDATA{Version=5.50.0.2960}
%TCIDATA{LastRevised=Sunday, June 02, 2019 21:13:17}
%TCIDATA{<META NAME="GraphicsSave" CONTENT="32">}
%TCIDATA{<META NAME="SaveForMode" CONTENT="1">}
%TCIDATA{BibliographyScheme=Manual}
%BeginMSIPreambleData
\providecommand{\U}[1]{\protect\rule{.1in}{.1in}}
%EndMSIPreambleData
\setlength{\textwidth}{6.25in}
\setlength{\textheight}{8.5in}
\setlength{\topmargin}{0.0in}
\oddsidemargin 0.0in
\evensidemargin 0.0in
\newcolumntype{P}[1]{>{\centering\arraybackslash}p{#1}}
\setcounter{secnumdepth}{4}
\DeclareGraphicsRule{.tif}{png}{.png}{`convert #1 `dirname #1`/`basename #1 .tif`.png}
\setcounter{secnumdepth}{4}
\titleformat{\paragraph}
{\normalfont\normalsize\bfseries}{\theparagraph}{1em}{}
\titlespacing*{\paragraph}
{0pt}{3.25ex plus 1ex minus .2ex}{1.5ex plus .2ex}

\newtheorem {prop}{Proposition}[section]
\newtheorem {lemm}{Lemma}[section]

\newtheorem {rem}{Remark}[section]
\newcommand{\bull}{\vrule height7pt width7pt depth0pt}
\newenvironment{proof}
{\noindent {\it Proof :}}{\ \ \ \hfill\bull \vskip.5cm}

\newcommand{\iz}{{\mbox{\rm \rlap Z\kern 1.4pt Z}}}

\begin{document}

\title{Exact sampling for some multi-dimensional queueing models with renewal input}
\author{Jose Blanchet \thanks{Department of Management Science and Engineering,
Stanford University, Stanford, CA 94305.}
\and Yanan Pei \thanks{Department of Industrial Engineering and Operations
Research, Columbia University, New York, NY 10027.}
\and Karl Sigman \footnotemark[2]}
\maketitle

\begin{abstract}
Using a result of Blanchet and Wallwater (2015: Exact sampling of stationary
and time-reversed queues. \textit{ACM TOMACS}, \textbf{25}, 26) for exactly
simulating the maximum of a negative drift random walk queue endowed with
independent and identically distributed (iid) increments, we extend it to a
multi-dimensional setting and then we give a new algorithm for simulating
exactly the stationary distribution of a \textit{first-in-first-out} (FIFO)
multi-server queue in which the arrival process is a general renewal process
and the service times are iid; the FIFO $GI/GI/c$ queue with $2\leq c<\infty$.
Our method utilizes dominated coupling from the past (DCFP) as well as the
Random Assignment (RA) discipline, and complements the earlier work in which
Poisson arrivals were assumed, such as the recent work of Connor and Kendall
(2015: Perfect simulation of $M/G/c$ queues. \textit{Advances in Applied
Probability}, \textbf{47}, 4). We also consider the models in continuous-time
and show that with mild further assumptions, the exact simulation of those
stationary distributions can also be achieved. We also give, using our FIFO
algorithm, a new exact simulation algorithm for the stationary distribution of
the infinite server case, the $GI/GI/\infty$ model. Finally, we even show how
to handle \textit{Fork-Join} queues, in which each arriving customer brings
$c$ jobs, one for each server.

\vskip .5 cm \noindent\textbf{Keywords} \textsc{exact sampling; multi-server
queue; random walks; random assignment; dominated coupling from the past. }
\vskip .5 cm \noindent\textbf{AMS Subject Classification:} \textrm{65C05;
90B22; 60K25;60J05;60K05;68U20.}

\end{abstract}

\baselineskip 12pt

\vskip .5 cm

\newpage

\section{Introduction}

In recent years, the method of \textit{exact simulation} has evolved as a
powerful way of sampling from stationary distributions of queueing models for
which such distributions can not be derived explicitly. The main method itself
is referred to as \textit{coupling from the past} (CFP) as introduced in Propp
and Wilson \cite{P-W} for finite-state discrete-time Markov chains. Since
then, the method has been generalized to cover general state space Markov
chains by using dominating processes; this is known as \textit{dominated
coupling from the past} (DCFP) as in Kendall \cite{Kendall}. The main purpose
of such methods is to produce a copy by simulation that exactly (not
approximately) has the stationary distribution desired. These methods involve
simulating processes backwards in time. In the present paper we consider using
such methods for the FIFO multi-server queue, denoted as the FIFO $GI/GI/c$
queue, $2\le c<\infty$, where $c$ denotes the number of servers working in
parallel, and arriving customers wait in one common queue (line).

The first algorithms yielding exact simulation in stationarity of the FIFO
$GI/GI/c$ queue are \cite{KS-I} and \cite{KS-II}, in which Poisson arrivals
are assumed; i.e., the $M/G/c$ case. In \cite{KS-I}, a DCFP method is used,
but the strong condition of \textit{super stability} is assumed, $\rho<1$,
instead of $\rho<c$. ($\rho=E(S)/E(T)$, where $T$ and $S$ denote an
interarrival time and service time respectively; stability only requires that
$\rho<c$.) As a dominating process, the $M/G/1$ queue is used under Processor
Sharing (PS) together (key) with its time-reversibility properties. In PS,
there is no line; all customers are served simultaneously but at a rate $1/n$
when there are $n\ge1$ customers in service. Then in \cite{KS-II}, the general
$\rho<c$ case is covered by using a forward time regenerative method (a
general method developed in \cite{A-G-T}) and using the $M/G/c$ model under a
random assignment (RA) discipline as an upper bound; a model in which each
arrival joins the $i^{th}$ queue with probability $1/c$ independently. (The
general forward-time regenerative method in \cite{A-G-T} unfortunately always
yields infinite expected termination time.) Then in \cite{C-K}, Connor and
Kendall generalize the DCFP/PS method in \cite{KS-I} by using the RA model.
They accomplish this by first exactly simulating the RA model in stationarity
backwards in time under PS at each node, then re-constructing it to obtain the
RA model with FIFO at each node and doing so in such a way that a sample-path
upper bound for the FIFO $M/G/c$ is achieved.

As for renewal arrivals (the general FIFO $GI/GI/c$ queue considered here) the
methods used above break down for various reasons, primarily because while
under Poisson arrivals the $c$ stations under RA become independent, they are
not independent for general renewal arrivals. Also, the time-reversibility
property of PS no longer holds, nor does Poisson Arrivals See Time Averages
(PASTA). Finally, under general renewal arrivals, the system may never empty
once it begins operating. New methods are needed. Blanchet, Dong and Pei
(\cite{B-D-P}) solve the problem by utilizing a vacation model as an upper
bound. In the present paper, however, we utilize DCFP by directly simulating
the RA model in reverse-time (under FIFO at each node). Our method involves
extending, to a multi-dimensional setting, a recent result of Blanchet and
Wallwater (\cite{B-W}) for exactly simulating the maximum of a negative drift
random walk endowed with iid increments. We also remark on how our approach
can lead to new results for other models too, such as multi-server queues
under the \textit{last-in-first-out} (LIFO) discipline, or the
\textit{randomly choose next} discipline, and even Fork-Join models (also
called split and match models).

\section{ The FIFO $GI/GI/c$ model}

\label{FIFO-model} Here we set up the classic first-in-first-out (of
queue/line) (FIFO) multi-server queueing model and its associated Markov chain
known as the \textit{Kiefer-Wolfowitz workload vector} (for further details,
see for example, page 341 in Chapter 12 of \cite{Asmussen-Book}, and the
original paper \cite{K-W-1955}) In what follows, as input to a $c$-server in
parallel multi-server queue with $c\ge2$, we have i.i.d. service times
$\{S_{n}:n\ge0\}$ distributed as $G(x) = P(S \le x),\ x\ge0$, with finite and
non-zero mean $0<E(S)=1/\mu<\infty$. Independently, the arrival times $\{t_{n}
: n\ge0\}$ ($t_{0}=0$) of customers to the model form a renewal process with
i.i.d. interarrival times $T_{n} = t_{n+1}-t_{n},\ n\ge0$ distributed as
$A(x)=P(T\le x),\ x\ge0,$ and finite non-zero arrival rate $0<\lambda=
{E(T)}^{-1}<\infty$. The FIFO $GI/GI/c$ model has only one queue (line), and
customers upon arrival join the end of the queue and then attend service at
the station that becomes free first (just like a USA postoffice). Because of
the FIFO assumption, the $n^{th}$ service time initiated for use by a server,
$S_{n}$, is used on the $n^{th}$ arrival (they arrive at time $t_{n}$), and so
one could equivalently imagine/assume that $S_{n}$ is brought to the system by
the $n^{th}$ arrival. In this equivalent form, we say that at time $t_{n}$,
the workload in the system has jumped upward by the amount $S_{n}$. Each
server is identical in that they process service times at rate $1$. We let
$\mathbf{ W}_{n}=\left(  W_{n}(1),\ldots, W_{n} (c)\right)  ^{T}$ denote the
Kiefer-Wolfowitz workload vector, defined recursively by
\begin{equation}
\label{K-W-discrete}\mathbf{ W}_{n+1}=\mathcal{R}\left(  \mathbf{ W}_{n} +
S_{n} \mathbf{e}-T_{n}\mathbf{f}\right)  ^{+},\ n\ge0,
\end{equation}
where $\mathbf{e}=(1,0,\ldots0)^{T}$, $\mathbf{f}=(1,1,\ldots, 1)^{T}$,
$\mathcal{R}$ places a vector in ascending order, and $^{+}$ takes the
positive part of each coordinate. The vector $\mathbf{W}_{n}$ shows, in
ascending order, how much work at time $t_{n}$ each server will process from
among all work in the system at that time not including $S_{n}$. Letting
$C_{n}$ denote the $n^{th}$ arriving customer, $D_{n}=W_{n}(1)$ is the
customer delay in queue (line) of $C_{n}$, because the server with the least
amount of work will be the first to empty in front of $C_{n}$. Recursion
(\ref{K-W-discrete}) defines a $c$-dimensional Markov chain due to the given
i.i.d. assumptions. The great importance of the recursion is that it yields
$\{D_{n}:n\ge0\}$, which is thus a function of a Markov chain.

With stability condition $\rho=\lambda/\mu<c$, it is well known that $\mathbf{
W}_{n}$ converges in distribution as $n\to\infty$ to a proper stationary
distribution (hence so does $D_{n}$). Let $\pi$ denote this stationary
distribution. Our main objective in the present paper is to provide a
simulation algorithm for sampling exactly from $\pi$.

\section{The RA $GI/GI/c$ model}

Given a $c$-server queueing system, the random assignment model (RA) is the
case when each of the $c$ servers forms its own FIFO single-server queue, and
each arrival to the system, independent of the past, randomly chooses queue
$i$ to join with equal probability $1/c,\ 1\le i\le c$. In the $GI/GI/c$ case,
we refer to this as the RA $GI/GI/c$ model. The following is a special case of
Lemma 1.3, Page 342 in \cite{Asmussen-Book}. (Such results and others even
more general are based on \cite{Wolff-bounds}, \cite{Foss}, and
\cite{Chernova-Foss}.)

\begin{lemm}
\label{l:main-inequality} Let $Q_{F}(t)$ denote total number of customers in
system at time $t\ge0$ for the FIFO $GI/GI/c$ model, and let $Q_{RA}(t)$
denote total number of customers in system at time $t\ge0$ for the
corresponding RA $GI/GI/c$ model in which both models are initially empty and
fed with exactly the same input of renewal arrivals $\{t_{n}:n\ge0\}$ and iid
service times $\{S_{n}:n\ge0\}$. Assume further that for both models the
service times are used by the servers in the order in which service
initiations occur ($S_{n}$ is the service time used for the $n^{th}$ such
initiation). Then
\begin{equation}
\label{e:main-inequality}P(Q_{F}(t)\le Q_{RA}(t),\ \hbox{for all}\ t\ge0)=1.
\end{equation}

\end{lemm}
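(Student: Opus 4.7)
My plan is a pathwise coupling argument that reduces the claim $Q_{F}(t)\le Q_{RA}(t)$ to a comparison of service-initiation epochs. I realize both systems on a common probability space with the same arrival stream $\{t_n\}$ and with the convention that in each system separately the $n$-th service initiation uses service time $S_n$. Writing $\sigma_n^F$ and $\sigma_n^{RA}$ for these $n$-th initiation epochs, the $k$-th completion in system $*\in\{F,RA\}$ occurs at $\sigma_k^* + S_k$, so $Q_*(t)=A(t)-|\{k:\sigma_k^*+S_k\le t\}|$, where $A(t)$ is the common arrival count. Consequently it suffices to prove the pathwise inequality $\sigma_n^F\le \sigma_n^{RA}$ for every $n\ge 1$.

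I would prove this by induction on $n$. The base case $n=1$ is immediate since both first initiations occur at the first arrival epoch. The inductive step rests on the structural difference between the two disciplines: FIFO is work-conserving at the system level (no server may idle while the central queue is non-empty), whereas RA is only work-conserving at each individual single-server queue, so an RA server can remain idle even when a customer is waiting in some other queue. Equivalently, the Kiefer--Wolfowitz recursion $\mathbf{W}_{n+1}=\mathcal{R}(\mathbf{W}_n+S_n{\bf e}-T_n{\bf f})^+$ for FIFO deposits each new load on the server with the smallest current workload (the JSW rule), while RA deposits it on a uniformly chosen server; the latter can only postpone future initiations.

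To propagate the bound through the merged event stream (arrivals and completions in either system), the natural induction hypothesis to carry is that the sorted vector of next-server-available times in FIFO is coordinate-wise dominated by the corresponding vector in RA. Verifying preservation of this invariant at each event type is largely routine and immediately implies $\sigma_n^F\le \sigma_n^{RA}$. The main obstacle is the bookkeeping forced by the service-time coupling: a given customer is typically the $k$-th to initiate in FIFO but the $m$-th to initiate in RA with $k\neq m$, so she is assigned different service times in the two models; the invariant must therefore match initiations by index, not by customer. This type of pathwise coupling argument is classical and underlies the comparisons of Wolff, Foss, and Chernova--Foss cited alongside the statement; once the invariant is in place, the conclusion $Q_{F}(t)\le Q_{RA}(t)$ almost surely for all $t\ge 0$ follows.
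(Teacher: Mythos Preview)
The paper does not supply its own proof of this lemma; it records it as a special case of Lemma~1.3, p.~342 of Asmussen's book, with the underlying comparison attributed to Wolff, Foss, and Chernova--Foss. Your reduction --- writing $Q_*(t)=A(t)-|\{k:\sigma_k^*+S_k\le t\}|$ under the initiation-order coupling and then showing $\sigma_n^{F}\le\sigma_n^{RA}$ for all $n$ --- is exactly the mechanism those classical references use, so your proposal is in line with what the paper defers to.

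One point to tighten. The invariant you name, coordinate-wise domination of the sorted ``next-server-available'' vectors, is awkward on the RA side: a server's next-available time depends on the service times of the customers queued there, and under the initiation-order coupling those times are fixed only once the global initiation order across all $c$ servers has been resolved --- this is precisely the circularity you flag as a bookkeeping obstacle, and it means your stated invariant is not well defined as written. The cleaner route, and the one implicit in the cited proofs, is to carry only the scalar inequalities $\sigma_k^{F}\le\sigma_k^{RA}$ for $k<n$ and close the induction directly. For $n>c$, FIFO (being work-conserving at the system level) satisfies
\[
\sigma_n^{F}=\max\Bigl(t_n,\ \text{$(n{-}c)$-th smallest of }\{\sigma_k^{F}+S_k:1\le k\le n-1\}\Bigr),
\]
while \emph{any} policy obeys the corresponding inequality $\sigma_n^{RA}\ge\max\bigl(t_n,\ \text{$(n{-}c)$-th smallest of }\{\sigma_k^{RA}+S_k:1\le k\le n-1\}\bigr)$, since the $n$-th initiation requires both $n$ arrivals and a free server, hence at least $n-c$ of the first $n-1$ services already completed. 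Monotonicity of order statistics under the induction hypothesis then yields $\sigma_n^{F}\le\sigma_n^{RA}$ with no per-server bookkeeping.
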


The importance of Lemma~\ref{l:main-inequality} is that it allows us to
jointly simulate versions of the two stochastic processes $\{Q_{F}(t):t\ge0\}$
and $\{Q_{RA}(t):t\ge0\}$ while achieving a coupling such that
(\ref{e:main-inequality}) holds. In particular, whenever an arrival finds the
RA model empty, the FIFO model is found empty as well. (But we need to impose
further conditions if we wish to ensure that indeed the RA $GI/GI/c$ queue
will empty with certainty.) Letting time $t$ be sampled at arrival times of
customers, $\{t_{n} : n\ge0\}$, we thus also have%

\begin{equation}
\label{e:main-inequality-arrival-times}P(Q_{F}(t_{n}-)\le Q_{RA}%
(t_{n}-),\ \hbox{for all}\ n\ge0)=1.
\end{equation}

In other words, the total number in system as found by the $n^{th}$ arrival is
sample-path ordered as well. Note that for the FIFO model, the $n^{th}$
arriving customer $C_{n}$ initiates the $n^{th}$ service since FIFO means
``First-In-Queue-First-Out-of-Queue" where by ``queue" we mean the line before
entering service. This means that for the FIFO model we can attach $S_{n}$ to
$C_{n}$ upon arrival if we so wish when applying Lemma~\ref{l:main-inequality}%
. For the RA model, however, customers are not served in the order they
arrive. For example consider $c=2$ servers (system initially empty) and
suppose $C_{1}$ is assigned to node 1 with service time $S_{1}$, and $C_{2}$
also is assigned to node 1 (before $C_{1}$ departs) with service time $S_{2}$.
Meanwhile, before $C_{1}$ departs, suppose $C_{3}$ arrives and is assigned to
the empty node 2 with service time $S_{3}$. Then $S_{3}$ is used for the
second service initiation. \textit{For RA, the service times in order of
initiation are a random permutation of the originally assigned $\{S_{n}\}$.}

To use Lemma~\ref{l:main-inequality}, it is crucial to simply let the server
hand out service times one at a time when they are needed for a service
initiation. Thus, customers waiting in a queue before starting service do not
have a service time assigned until they enter service. In simulation
terminology, this amounts to generating the service times in order of when
they are needed.

One disadvantage of generating service times only when they are needed, is
that it then does not allow workload\footnote{Workload (total) at any time $t$
is defined as the sum of all whole and remaining service times in the system
at time $t$.} to be defined; only the amount of work in service. To get around
this if need be, one can simply generate service times upon arrival of
customers, and give them to the server to be used in order of service
initiation. The point is that when $C_{n}$ arrives, the total work in system
jumps up by the amount $S_{n}$. But $S_{n}$ is not assigned to $C_{n}$, it is
assigned (perhaps later) to which ever customer initiates the $n^{th}$
service. This allows Lemma~\ref{l:main-inequality} to hold true for total
amount of work in the system: If we let $\{V_{F}(t):t\ge0\}$ and
$\{V_{RA}(t):t\ge0\}$ denote total workload in the two models with the service
times used in the manner just explained, then in addition to
Lemma~\ref{l:main-inequality} we have
\begin{equation}
\label{e:main-inequality-work}P(V_{F}(t)\le V_{RA}(t),\ \hbox{for all}\ t\ge
0)=1,
\end{equation}
\begin{equation}
\label{e:main-inequality-work-arrival-times}P(V_{F}(t_{n}-)\le V_{RA}%
(t_{n}-),\ \hbox{for all}\ n\ge0)=1.
\end{equation}

It is important, however, to note that what one can't do is define workload at
the individual nodes $i$ by doing this, because that forces us to assign
$S_{n}$ to $C_{n}$ so that workload at the node that $C_{n}$ attends ($i$ say)
jumps by $S_{n}$ and $C_{n}$ enters service using $S_{n}$; that destroys the
proper coupling needed to obtain Lemma~\ref{l:main-inequality}. We can only
handle the total (sum over all $c$ nodes) workload. In the present paper, our
use of Lemma~\ref{l:main-inequality} is via a kind of reversal:

\begin{lemm}
\label{l: service-times-RA} Let $\{S_{n}^{\prime}\}$ be an iid sequence of
service times distributed as G, and assign $S_{n}^{\prime}$ to $C_{n}$ in the
RA model. Define $S_{n}$ as the service time used in the $n^{th}$ service
initiation. Then $\{S_{n}\}$ is also iid distributed as G.
\end{lemm}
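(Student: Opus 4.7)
The plan is to run an induction that makes precise the intuition that at each initiation time we sample a fresh, previously unused $S_k'$, whose value is independent of everything observed so far because it has not yet been revealed by any dynamics.

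First I would set up a filtration. Let $\{R_n\}$ denote the iid uniform routings in $\{1,\ldots,c\}$ (so $R_n$ is the queue joined by $C_n$), and let $\sigma(k)$ denote the index of the customer whose service initiates as the $k$-th one. Put $\mathcal{F}_n := \sigma\bigl(\{t_k\}_{k\ge 1},\ \{R_k\}_{k\ge 1},\ S_1,\ldots,S_n\bigr)$. The main structural step is to verify that $\sigma(n+1)$ is $\mathcal{F}_n$-measurable: given the arrival times, the routings, and the durations of the $n$ services already started, one can compute deterministically the evolution of each node-queue up to the instant of the $(n+1)$-st initiation, and identify the customer at the head of the newly-available queue under FIFO. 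Also, by definition of an initiation, $\sigma(n+1) \notin \{\sigma(1),\ldots,\sigma(n)\}$.

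Next I would note that the random variables $\{S_k' : k \notin \{\sigma(1),\ldots,\sigma(n)\}\}$ are iid with law $G$ and independent of $\mathcal{F}_n$. Indeed, $\mathcal{F}_n$ is a function of the arrivals and routings (both independent a priori of $\{S_k'\}$) together with the already-revealed values $S'_{\sigma(1)},\ldots,S'_{\sigma(n)}$, so by the independence of disjoint subfamilies of an iid sequence, the unused $S_k'$'s remain iid $G$ conditionally on $\mathcal{F}_n$.

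The conclusion then follows by a direct conditioning step: for any Borel $B \subseteq [0,\infty)$ and any $H \in \mathcal{F}_n$,
\[
P(S_{n+1}\in B,\ H) = \sum_{k\ge 1} P\bigl(\{\sigma(n+1)=k\}\cap H \cap \{S_k'\in B\}\bigr) = \sum_{k\ge 1} P\bigl(\{\sigma(n+1)=k\}\cap H\bigr)\,G(B) = G(B)\,P(H),
\]
using on each summand that $\{\sigma(n+1)=k\}\cap H \in \mathcal{F}_n$ and that on this event $k$ is outside $\{\sigma(1),\ldots,\sigma(n)\}$, so $S_k'$ is still $G$-distributed and independent of $\mathcal{F}_n$. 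Hence $S_{n+1}$ is independent of $\mathcal{F}_n$ with law $G$, and iterating $n$ gives that $\{S_n\}$ is iid $G$. The only delicate point (and the part I expect to be the main obstacle to state cleanly rather than to prove) is the $\mathcal{F}_n$-measurability of $\sigma(n+1)$: one must argue carefully that the identity of the next initiating customer is a function of arrivals, routings, and already-used service times only, with no contamination from the unused $S_k'$'s.
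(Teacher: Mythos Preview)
The paper states this lemma without proof (it immediately moves on to ``The point of the above Lemma\ldots''), so there is nothing to compare your argument against. Your argument is a correct and standard route: building the filtration $\mathcal{F}_n$, showing $\sigma(n+1)$ is $\mathcal{F}_n$-measurable, and concluding via the displayed conditioning computation is exactly right.

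The one step worth tightening is the independence claim for the unused $S_k'$'s, since the index set $\{\sigma(1),\ldots,\sigma(n)\}$ is itself random; the cleanest fix is to further partition your sum over the deterministic values $(\sigma(1),\ldots,\sigma(n))=(i_1,\ldots,i_n)$, so that on each piece the event $\{\sigma(1)=i_1,\ldots,\sigma(n)=i_n,\sigma(n+1)=k\}\cap H$ lies in $\sigma(\{T_j\},\{R_j\},S'_{i_1},\ldots,S'_{i_n})$ and $S_k'$ (with $k\notin\{i_1,\ldots,i_n\}$) is manifestly independent of it by the product structure. Your flagged concern about the $\mathcal{F}_n$-measurability of $\sigma(n+1)$ is well-placed but easy to settle: each node is a FIFO single-server queue, so the $(n+1)$-st initiation happens either at an arrival to an idle node or at the completion of one of the first $n$ services, and in either case both its time and the identity of the initiating customer are determined by the arrivals, the routings, and $S_1,\ldots,S_n$ alone.
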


\begin{proof}
The key is noting that we are re-ordering based only on the order in which
service times begin being used, not when they are completed (which would thus
introduce a bias). The service time chosen for the next initiation either
enters service immediately (e.g., is one that is routed to an empty queue by
an arriving customer) or is chosen from among those waiting in lines, and all
those waiting are iid distributed as $G$. Let ${\hat t}_{n}$ denote the time
at which the $n^{th}$ service initiation begins. The value $S_{n}$ of the
$n^{th}$ service time chosen (at time ${\hat t}_{n}$) by a server is
independent of the past service time values used before time ${\hat t}_{n}$,
and is distributed as $G$ (the choice of service time chosen as the next to be
used is not based on the value of the service time, only its position in the
lines). Letting $k(n)=$ the index of the $\{S^{\prime}_{n}\}$ that is chosen,
i.e., $S_{n}= S^{\prime}_{k(n)}$, it is this index (a random variable) that
depends on the past, but the value $S_{n}$ is independent of $k(n)$ since it
is a new one. Thus the $\{S_{n}\}$ are iid distributed as $G$.
\end{proof}

The point of the above Lemma~\ref{l: service-times-RA} is that we can, if we
so wish, simulate the RA model by assigning $S_{n}^{\prime}$ to $C_{n}$ (to be
used as their service time), but then assigning $S_{n}$, i.e. $S^{\prime
}_{k(n)}$, to $C_{n}$ in the FIFO model. By doing so the requirements of
Lemma~\ref{l:main-inequality} are satisfied and (\ref{e:main-inequality}),
(\ref{e:main-inequality-arrival-times}), (\ref{e:main-inequality-work}) and
(\ref{e:main-inequality-work-arrival-times}) hold. Interestingly, however, it
is not possible to first simulate the RA model up to a fixed time $t$, and
then stop and reconstruct the FIFO model up to this time $t$: At time $t$,
there may still be RA customers waiting in lines and hence not enough of the
$S_{n}$ have been determined yet to construct the FIFO model. But all we have
to do, if need be, is to continue the simulation of the RA model beyond $t$
until enough $S_{n}$ have been determined to construct fully the FIFO model up
to time $t$.

\section{Simulating exactly from the stationary distribution of the RA
$GI/GI/c$ model}

\label{sub:RA-simulation-exact}

By Lemma \ref{l:main-inequality}, the RA $GI/GI/c$ queue, which shares the
same arrival stream $\{t_{n}:n\ge0\}\ (t_{0}=0)$ and same service times in the
order of service initiations $\{S_{n}:n\ge0\}$, will serve as a sample path
upper bound (in terms of total number of customers in system and total
workload) of the target FIFO $GI/GI/c$ queue. Independent of $\{T_{n}:n\ge0\}$
and $\{S_{n}:n\ge0\}$, we let $\{U_{n}:n\ge0\}$ be an iid sequence of random
variables from discrete uniform distribution on $\{1,2,\ldots,c\}$; $U_{n}$
represents the choice that customer $C_{n}$ makes about which single-server
queue to join under RA discipline. Let $\mathbf{V}_{n}=\left(  V_{n}%
(1),\ldots,V_{n}(c)\right)  ^{T}$ denote the workload vector as found by
$C_{n}$ in the RA $GI/GI/c$ model, and for $i=1,\ldots,c$, $V_{n}(i)$ is the
waiting time of the $C_{n}$ if he chooses to join the FIFO single-server queue
of server $i$. So, $V_{0}(i)=0$ and
\begin{equation}
\label{e:RA-RRW-marginals}V_{n+1}(i)=\left(  V_{n}(i)+S_{n}I\left(
U_{n}=i\right)  -T_{n}\right)  ^{+},~~~n\ge0.
\end{equation}

%Letting ${\bf V}_n = (V_n(1),\ldots ,V_n(c))$ denote workload (at each node) as found by the $n^{th}$ arriving customer to the RA model, from the exogenous renewal process with arrival
%times $\{t_n\}$ and iid interarrival times $T_n = t_{n+1}-t_n,\ n\ge 0$ ($t_0=0$), and letting
%$S_n(i) = S_n\cdot I\{U_n = i\}$, where independently $\{U_n: n\ge 0\}$ denotes an iid sequence of random variables with the discrete uniform distribution over $\{1, 2,\ldots, c\}$, ($I\{B\}$ is the indicator random variable for the event $B$), we have, for each node
%$i\in \{1,2,\ldots ,c\}$,

These $c$ processes are dependent through the common arrival times
$\{t_{n}:n\ge0\}$ (equivalently common interarrival times $\{T_{n}:n\ge0\}$)
and the common $\{U_{n}:n\ge0\}$ random variables. Because of all the iid
assumptions, $\{\mathbf{V}_{n}:n\ge0\}$ forms a Markov chain. Define
$\tilde{\mathbf{S}}_{n} = \left(  S_{n}I\left(  U_{n}=1\right)  ,\ldots,
S_{n}I\left(  U_{n}=c\right)  \right)  ^{T}$ and $\mathbf{T}_{n} =
T_{n}\mathbf{f}$, then we can express (\ref{e:RA-RRW-marginals}) in vector
form as
\begin{equation}
\label{e:RA-RRW-vector}\mathbf{V}_{n+1} =\left(  \mathbf{V}_{n} +\tilde
{\mathbf{S}}_{n} -\mathbf{T}_{n}\right)  ^{+},~~~n\ge0.
\end{equation}
$\mathbf{V}_{n}$ uses the same interarrival times $\{T_{n}:n\ge0\}$ and
service times $\{S_{n}:n\ge0\}$ as we fed $\mathbf{W}_{n}$ in
(\ref{K-W-discrete}), however the coordinates of $\mathbf{V}_{n}$ are not in
ascending order, though all of them are nonnegative.

Each node $i$ as expressed in (\ref{e:RA-RRW-marginals}) can be viewed as a
FIFO $GI/GI/1$ queue with common renewal arrival process $\{t_{n}:n\ge0\}$,
but with iid service times $\{\tilde{S}_{n}(i)=S_{n}I(U_{n}=i):n\ge0\}$.
Across $i$, the service times $(\tilde{S}_{n}(1),\ldots,\tilde{S}_{n}(c))$ are
not independent, but they are identically distributed: marginally, with
probability $1/c$, $\tilde{S}_{n}(i)$ is distributed as $G$, and with
probability $(c-1)/c$ it is distributed as the point mass at $0$; i.e.,
$E(\tilde{S}(i)) = E(S)/c$. The point here is that we are not treating node
$i$ as a single-server queue endowed only with its own arrivals (a thinning of
the $\{t_{n}:n\ge0\}$ sequence) and its own service times iid distributed as
$G$. Defining iid increments $\Delta_{n}(i) = \tilde{S}_{n}(i)-T_{n}$ for
$n\ge0$, each node $i$ has an associated negative drift random walk
$\{R_{n}(i) : n\ge0\}$ with $R_{0}(i)=0$ and%

\begin{equation}
\label{e:RW-marginals}R_{n} (i)=\sum_{j=1}^{n} \Delta_{j} (i),~~~n\ge1.
\end{equation}

With $\rho= \lambda E(S)<c$, we define $\rho_{i}=\lambda E(\tilde
{S}(i))=\lambda E(S)/c=\rho/c < 1$; equivalently $E(\Delta(i)) < 0$ for all
$i=1,\ldots,c$. Let $V^{0}(i)$ denote a random variable with the limiting
(stationary) distribution of $V_{n}(i)$ as $n\to\infty$, it is well known (due
to the iid assumptions) that $V^{0}(i)$ has the same distribution as
\[
M(i)\triangleq\max_{m\ge0} R_{m}(i)
\]
for $i=1,\ldots,c$.

More generally, even when the increment sequence is just stationary ergodic,
not necessarily iid (hence not time reversible as in the iid case), it is the
backwards in time maximum that is used in constructing a stationary version of
$\{V_{n}(i)\}$. We will need this backwards approach in our simulation so we
go over it here; it is usually referred to as \textit{Loynes' Lemma}. We
extend the arrival point process $\{t_{n}:n\ge0\}$ to be a two-sided point
stationary renewal process $\{t_{n}:n\in\mathbb{Z}\}$
\[
\cdots t_{-2}<t_{-1}<0=t_{0}<t_{1}<t_{2}\cdots
\]
Equivalently, $T_{n}=t_{n+1}-t_{n},\ n\in\mathbb{Z},$ form iid interarrival
times; $\{T_{n}:n\in\mathbb{Z}\}$ forms a two-sided iid sequence.

Similarly, the iid sequences $\{S_{n}:n\ge0\}$ and $\{U_{n}:n\ge0\}$ are
extended to be two-sided iid, $\{S_{n}:n\in\mathbb{Z}\}$ and $\{U_{n}%
:n\in\mathbb{Z}\}.$ These extensions further allow two-sided extension of the
iid increment sequences $\{\Delta_{n}(i) : n\in\mathbb{Z}\}$ for
$i=1,\ldots,c$, i.e.,
\[
\Delta_{n}(i)=\tilde{S}_{n}-T_{n}=S_{n}I\left(  U_{n}=i\right)  -T_{n}%
,~~~n\in\mathbb{Z}.
\]

%{\color{blue}We extend the arrival sequence $\{t_n:n\ge0\}$ to be two-sided time stationary renewal process; $n\in\mathbb{Z}\setminus\{0\}$. Now for $n\ge1$, $t_n>0$ denotes %the $n^{th}$ arriving time after time $0$ counting forwards in time, and for $n\le-1$, $t_n<0$ denotes the $(-n)^{th}$ arriving time, counting backwards in time, from time $0$. Then %we extend the interarrival sequence to be two-sided, i.e. $\{T_n:n\in\mathbb{Z}\}$, in which $T_n=t_{n+1}-t_n$ for $n\le-2$ and $n\ge1$, $T_{-1}=-t_{-1}$ and $T_0=t_1$. Note %that $T_n$'s are iid from distribution $A$ for $n\ge-2$ and $n\ge1$, but $T_{-1}$ and $T_0$ are from a different distribution, i.e. $A_e$,  because of inspection paradox. Likewise, %the sequences $\{S_n\},\ \{U_n\},\ n\ge 0,$ are extended to be two-sided iid;  $n\in\mathbb{Z}\setminus\{0\}$. These extensions further allow two-sided extension of iid increment %sequences
%$\{\Delta_n(i) : n\in\mathbb{Z}\setminus\{0\}\}$ for $i=1,\ldots,c$, i.e.
%$$\Delta_n(i)=S_nI\left(U_n=i\right)-T_n,~~~n\in\mathbb{Z}\setminus\{0\}.$$
%}

Then we define $c$ time-reversed (increments) random walks $\{R^{(r)}%
_{n}(i):n\ge0\}$ for $i=1,\ldots,c$, by $R_{0}^{(r)}(i)=0$ and
\begin{equation}
\label{e:RW-marginals-reverse-time}R_{n}^{(r)} (i)=\sum_{j=1}^{n} \Delta_{-j}
(i),~~~n\ge1.
\end{equation}

A (from the infinite past) stationary version of $\{V_{n}(i)\}$ denoted by
$\{V_{n}^{0}(i): n\le0\}$ is then constructed via
\begin{align}
V_{0}^{0}(i)  &  = \max_{m\ge0}R_{m}^{(r)} (i),\\
V_{-1}^{0}(i)  &  = \max_{m\ge1}R_{m}^{(r)} (i)- R_{1}^{(r)} (i),\\
V_{-2}^{0}(i)  &  = \max_{m\ge2}R_{m}^{(r)} (i)- R_{2}^{(r)} (i),\\
&  \vdots\nonumber\\
V_{-n}^{0}(i)  &  = \max_{m\ge n}R_{m}^{(r)} (i)- R_{n}^{(r)} (i),
\end{align}
for all $i=1,\ldots,c$.

By construction, the process $\mathbf{V}_{n}^{0} = (V_{n}^{0}(1),\ldots,
V_{n}^{0}(c))^{T},\ n\le0$, is jointly stationary representing a (from the
infinite past) stationary version of $\left\{  \mathbf{V}_{n}:n\le0\right\}
$, and satisfies the forward-time recursion (\ref{e:RA-RRW-vector}):
\begin{equation}
\label{e:from-past-recursion}\mathbf{V}_{n+1}^{0} =(\mathbf{V}_{n}^{0}
+\tilde{\mathbf{S}}_{n} -\mathbf{T}_{n})^{+},~~~n\le-1.
\end{equation}
Thus, by starting at $n =0$ and walking backwards in time, we have
(theoretically) a time-reversed copy of the RA model. Furthermore,
$\{\mathbf{V}_{n}^{0}:n\le0\}$ can be extended to include forward time $n\ge1$
via using the recursion further:
\begin{equation}
\label{e:into-future-recursion}\mathbf{V}_{n}^{0} =(\mathbf{V}_{n-1}^{0}
+\tilde{\mathbf{S}}_{n-1} -\mathbf{T}_{n-1})^{+},~~~n\ge1,
\end{equation}
where $\tilde{\mathbf{S}}_{n}=\left(  S_{n}I(U_{n}=1),\ldots,S_{n}%
I(U_{n}=c)\right)  ^{T}$ for $n\in\mathbb{Z}$.

In fact once we have a copy of just $\mathbf{V}_{0}^{0}$, we can start off the
Markov chain with it as initial condition and use
(\ref{e:into-future-recursion}) to obtain a forward in time stationary version
$\{\mathbf{V}_{n}^{0}:n\ge0\}$.

The above ``construction", however, is theoretical. We do not yet have any
explicit way of obtaining a copy of $\mathbf{V}_{0}^{0}$, let alone an entire
from-the-infinite-past sequence $\{ \mathbf{V}_{n}^{0}: n\le0 \}$. In Blanchet
and Wallwater \cite{B-W}, a simulation algorithm is given that yields (when
applied to each of our random walks), for each $1\le i\le c$, a copy of $\{(
R_{n}^{(r)} (i), V_{-n}^{0}(i) ) : 0\le n\le N\}$ for any desired $0\le
N<\infty$ including stopping times $N$. We modify the algorithm so that it can
do the simulation jointly across the $c$ systems, that is, we extend it to a
multi-dimensional form.

In particular, it yields an algorithm for obtaining a copy of $\mathbf{V}%
_{0}^{0}$, as well as a finite segment (of length $N$) of a backwards in time
copy of the RA model; $\{\mathbf{V}_{-n}^{0}:0\le n\le N\}$, a stationary into
the past construction up to discrete time $n=-N$.

%explicitly know their means and variances, and
%have an explicit density function $g(x)$ for $G$.}
Finite exponential moments are not required (because only \textit{truncated}
exponential moments are needed $E(e^{\gamma\Delta(i)}I\{|\Delta(i)|\le a\})$,
which in turn allow for the simulation of the exponential tilting of truncated
$\Delta(i)$, via acceptance-rejection). To get finite expected termination
time (at each individual node) one needs the service distribution to have
finite moment slightly beyond $2$: For some (explicitly known) $\epsilon>0$,
\begin{equation}
\label{big-ass}E(S^{2+\epsilon})<\infty.
\end{equation}

As our first case, we will be considering a stopping time $N$ such that
$\mathbf{V}_{-N}=\mathbf{0}$. Before we give the definition of the stopping
time $N$, we introduce the main idea of our simulation algorithm.

Let us define the maximum of a sequence of vectors. Suppose we have
$\mathbf{Z}_{1},\cdots,\mathbf{Z}_{k}$, where $\mathbf{Z}_{i}\in\mathbb{R}%
^{d}$ with $d\ge1$ and $k\in\mathbb{N}_{+}\cup\{\infty\}$, define
\[
\max\left(  \mathbf{Z}_{1},\cdots,\mathbf{Z}_{k}\right)  =\left(  \max_{1\le
i\le k}Z_{i}(1),\ldots,\max_{1\le i\le k}Z_{i}(d)\right)  ^{T}.
\]
Next define, for $n\in\mathbb{Z}$, that
\[
\mathbf{U}_{n}=\left(  I(U_{n}=1),\ldots,I(U_{n}=c)\right)  ^{T}
~~~~\mbox{and}~~~~ \mathbf{\Delta}_{n}=\tilde{\mathbf{S}}_{n}-\mathbf{T}%
_{n}=S_{n}\mathbf{U}_{n}-T_{n}\mathbf{f},
\]
where $\{U_{n}:n\in\mathbb{Z}\}$ are iid from discrete uniform distribution
over $\{1,2,\ldots,c\}$, and independently $\{T_{n}:n\in\mathbb{Z}\}$ are iid
from distribution $A$ (as introduced in Section \ref{FIFO-model}).
%Our goal is to simulate a stopping time $N\in\mathbb{N}$ such that
%$${\bf V}_{-N}^0=\max_{k\ge N}{\bf R}^{(r)}_k-{\bf R}^{(r)}_N=\max_{k\ge N}\sum_{j=1}^{k}{\bf\Delta}_{-j}-\sum_{j=1}^N{\bf\Delta}_{-j}={\bf0},$$
Our goal is to simulate the stopping time $N\in\mathbb{N}$ such that
$\mathbf{V}^{0}_{-N}=\mathbf{0}$, defined as
\begin{equation}
\label{eq-stopping-time}N=\inf\{n\ge0:\mathbf{V}^{0}_{-n}=\max_{k\ge
n}\mathbf{R}_{k}^{(r)}-\mathbf{R}_{n}^{(r)}=\mathbf{0}\},
\end{equation}
i.e. the first time walking in the past, that all coordinates of the workload
vector are $0$, jointly with $\{(\mathbf{R}_{n}^{(r)},\mathbf{V}^{0}%
_{-n}):0\le n\le N\}$. (By convention, the value of any empty sum of numbers
is zero, i.e. $\sum_{j=1}^{0}a_{j}=0$.)

%Let
%\begin{equation*}
%{\bf X}_{-k}:=\sum_{j=1}^k\left(S_{-j}-a\right){\bf U}_{-j}~~~~\mbox{and}~~~~{\bf Y}_{-k}:=\sum_{j=1}^k\left(a{\bf U}_{-j}-{\bf T}_{-j}\right)
%\end{equation*}
%for $k\in\mathbb{N}$.%, and define
%\begin{equation}\label{e:separate-two-process}
%N=\inf\{n\ge0:\max_{k\ge n}{\bf X}_{-k}={\bf X}_{-n},~\max_{k\ge n}{\bf Y}_{-k}={\bf Y}_{-n}\}.
%\end{equation}

%By the inequality (\ref{stop-time-ineq}), we have $\max_{m\ge N}{\bf R}_m^{(r)}={\bf R}_{N}^{(r)}$,
%hence ${\bf V}^{0}_{-N}={\bf0}$.
%, hence {\color{red}$n^*$ is a stopping time we are looking for}, so we set $N\leftarrow n^*$.

%As our first case, we will be considering the stopping time $N$ %such that ${\bf  V}_{-N}^0={\bf 0}$.
%defined as
%\begin{equation}\label{e:main-stopping-time}
%N=\min\{n\ge 0: {\bf  V}_{-n}^0={\bf 0}\},
%\end{equation}
%i.e. the first time walking in the past, that all coordinates of the workload vector are $0$.

To ensure that $E(N)<\infty$, in addition to $\rho<c$ (stability), it is
required that $P(T >S)>0$ (see the proof of Theorem 2 in \cite{KS-1988}), for
which the most common sufficient conditions are that $T$ has unbounded
support, $P(T > t) > 0,\ t \ge0$, or $S$ has mass arbitrarily close to $0$,
$P(S < t) > 0,\ t > 0$. But as we shall show in
Section~\ref{sub: bounded-interarrival-times}, given we know that $P(T >S)>0$,
we can assume without loss of generality that interarrival times are bounded.
It is that assumption which makes the extension of \cite{B-W} to a
multidimensional form easier to accomplish. Then, we show (in
Section~\ref{s:algorithm-sandwich} and Section~\ref{s:Harris-Chain}) how to
still simulate from $\pi$ even when $P(T>S)=0$. We do that in two different
ways, one as sandwiching argument and the other involving Harris recurrent
Markov chain regenerations.

\subsection{Algorithm for simulating exactly from $\pi$ for the FIFO $GI/GI/c$
queue: The case $P(T > S)>0$}

\label{sub:algorithm-main-sketch}

As mentioned earlier, we will assume that $P(T > S)>0$, so that the stable
($\rho< c$) RA and FIFO $GI/GI/c$ Markov chains (\ref{e:RA-RRW-vector}) and
(\ref{K-W-discrete}) will visit 0 infinitely often with certainty. (That the
RA model empties infinitely often when $P(T > S)>0$ is proved for example in
\cite{KS-1988}). We imagine that at the infinite past $n=-\infty$, we start
both (\ref{e:RA-RRW-vector}) and (\ref{K-W-discrete}) from empty. We construct
the RA model forward in time, while using Lemma~\ref{l: service-times-RA} for
the service times for the FIFO model, so that Lemma~\ref{l:main-inequality}
applies and we have it in the form of (\ref{e:main-inequality-arrival-times}),
for all $t_{n}\le0$ up to and including at time $t_{0}=0$, at which time both
models are in stationarity. We might have to continue the construction of the
RA model so that $\mathbf{W}_{0}$ (distributed as $\pi$) can be constructed
(e.g., enough service times have been initiated by the RA model for using
Lemmas~\ref{l:main-inequality} and \ref{l: service-times-RA}). Formally, one
can theoretically justify the existence of such infinite from the past
versions (that obey Lemma~\ref{l:main-inequality}) -- by use of Loynes' Lemma.
Each model (when started empty) satisfies the monotonicity required to use
Loynes' Lemma. In particular, noting that $Q_{RA}(t_{n}-) = 0$ if and only if
$\mathbf{V}_{n} = \mathbf{0}$, we conclude that if at any time $n$ it holds
that $\mathbf{V}_{n} = \mathbf{0}$, then $\mathbf{W}_{n} = \mathbf{0}$. By the
Markov property, given that $\mathbf{V}_{n} = \mathbf{0}=\mathbf{W}_{n}$, the
future is independent of the past for each model, or said differently,
\textit{the past is independent of the future}. This remains valid if $n$ is
replaced by a stopping time (strong Markov property).

We outline the simulation algorithm steps as follows.

\begin{enumerate}
\label{en:algorithm-main-sketch}

\item Simulate $\{\{( R_{n}^{(r)} (i), V_{-n}^{0}(i) ) : 0\le n \le N\}, 1\le
i\le c\}$ with $N$ as defined in (\ref{eq-stopping-time}). If $N=0$, go to
next step. Otherwise, having stored all data, reconstruct $\mathbf{V}_{n}^{0}$
forwards in time from $n=-N$ (initially empty) until $n=0$, using the
recursion (\ref{e:from-past-recursion}). During this forward-time
reconstruction, re-define $S_{j}$ as the $j$-th service initiation used by the
RA model (i.e., we are using Lemma~\ref{l: service-times-RA} to gather service
times in the proper order to feed in the FIFO model, which is why we do the
re-construction). If at time $n=0$ there have not yet been $N$ service
initiations, then continue simulating the RA model out in forward time until
finally there is a $N^{th}$ service initiation, and then stop. This will
require, at most, simulating out to $t_{n}$ with $n = N^{(+)} = \min\{n\ge0 :
\mathbf{V}_{n}^{0} =\mathbf{0}\}$.
%Store service times $(S_{-N},S_{-N+1},\ldots,S_{-1})$ and interarrival times $(T_{-N},T_{-N+1},\ldots,T_{-1})$.}
Take the vector $(S_{-N},S_{-N+1},\ldots,S_{-1})$ and reset $(S_{0}%
,S_{1},\ldots,S_{N-1}) = (S_{-N},S_{-N+1},\ldots,S_{-1})$. Also, store the
interarrival times $(T_{-N},T_{-N+1},\ldots, T_{-1})$, and reset
$(T_{0},\ldots,T_{N-1}) = (T_{-N},T_{-N+1},\ldots, T_{-1})$.

\item If $N=0$, then set $\mathbf{W}_{0}=\mathbf{0}$ and stop. Otherwise use
(\ref{K-W-discrete}) with $\mathbf{W}_{0}=\mathbf{0}$, recursively go forwards
in time for $N$ steps until obtaining $\mathbf{W}_{N}$, by using the $N$
re-set service $(S_{0},S_{1},\ldots,S_{N-1})$ and interarrival times
$(T_{0},\ldots,T_{N-1})$. Reset $\mathbf{W}_{0}=\mathbf{W}_{N}$.

\item Output $\mathbf{W}_{0}$.
\end{enumerate}

Detailed simulation steps are discussed in Appendix (Section
\ref{appendix-simulation-algo}). Let $\tau$ denote the total number of
interarrival times and service times to simulate in order to detect the
stopping time $N$. The following proposition shows that our algorithm will
terminate in finite expected time, i.e., $\mathbb{E}(\tau)<\infty$. The proof
is given in Section \ref{proofs}.

\begin{prop}
\label{alg1-prop} If $\rho=\lambda/\mu<c$, $P(T>S)>0$, and there exists some
$\epsilon>0$ such that $E(S^{2+\epsilon})<\infty$, then
\[
E(N)<\infty~~~~\mbox{and}~~~~E(\tau)<\infty.
\]

\end{prop}

\subsection{A more efficient algorithm: sandwiching}

\label{s:algorithm-sandwich}

In this section, we no longer even need to assume that $P(T>S)>0$. (Another
method allowing for $P(T>S)=0$ involving Harris recurrent regeneration is
given later in Section~\ref{s:Harris-Chain}.) Instead of waiting for the
workload vector of the $GI/GI/c$ queue under RA discipline to become
$\mathbf{0}$
%{\color{red}for the first time}
, we choose an ``inspection time" $t_{-\kappa}<0$ for some $\kappa
\in\mathbb{Z}_{+}$ to stop the backward simulation of the RA $GI/GI/c$ queue,
then construct two bounding processes of the target FIFO $GI/GI/c$ queue and
evolve them forward in time, using the same stream of arrivals and service
time requirements (in the order of service initiations), until coalescence or
time zero. In particular we let the upper bound process to be a FIFO $GI/GI/c$
queue starting at time $t_{-\kappa}$ with workload vector being $\mathbf{V}%
^{0}_{-\kappa}$, and let the lower bound process to be a FIFO $GI/GI/c$ queue
starting at the same time $t_{-\kappa}$ from empty, i.e., with workload vector
being $\mathbf{0}$.

%we can choose some time $-\hat{T}<0$ (we call such time as ``inspection time") to stop the backwards simulation of the process under RA, then construct an upper bound $GI/GI/c$ FIFO process (starting at time $-\hat{T}$ with workload vector being ${\bf V}^0(-\hat{T})$) and a lower bound $GI/GI/c$ FIFO process (starting at time $-\hat{T}$ from empty). We then evolve these two bounding processes using the same stream of arrivals and service time requirements.

%{\color{red}
%We can choose the inspection time $-\hat{T}$ be the time spot of some arrival, such as the $-\kappa^{th}$ ($\kappa\in\mathbb{N}_+$) arrival time, i.e. $-\hat{T}=-\sum_{j=1}^{\kappa}T_{-j}$, then
%$${\bf V}^0(-\hat{T})={\bf V}^0_{-\kappa}=\max_{m\ge\kappa}{\bf R}^{(r)}_m-{\bf R}^{(r)}_{\kappa}.$$

%For $n\in\mathbb{Z}/\{0\}$, let $t_n$ be the arrival time of $n^{th}$ arrival, i.e.
%\begin{equation*}
%t_n=\begin{cases}
%-\sum_{j=-1}^n T_j & \mbox{if }n<0\\
%\sum_{j=1}^nT_j & \mbox{if }n>0
%\end{cases}
%\end{equation*}}

Let $\mathbf{W}(t)$
%$\tilde{\bf V}(t)$
denote the ordered (ascendingly) workload vector of the original FIFO
$GI/GI/c$ queueing process, starting from the infinite past, evaluated at time
$t$. For $t\ge t_{-\kappa}$, we define $\mathbf{W}^{u}_{-\kappa}(t)$
%$\tilde{\bf V}^u_{-\kappa}(t)$
and $\mathbf{W}^{l}_{-\kappa}(t)$
%$\tilde{\bf V}^l_{-\kappa}(t)$
to be the ordered (ascendingly) workload vectors of the upper bound and lower
bound processes, initiated at the inspection time $t_{-\kappa}$, evaluated at
time $t$. By our construction and Theorem 3.3 in \cite{C-K},
\[
\mathbf{W}_{-\kappa}^{u}(t_{-\kappa})=\mathcal{R}\left(  \mathbf{V}%
^{0}_{-\kappa}\right)  \ge\mathbf{W}(t_{-\kappa})\ge\mathbf{W}_{-\kappa}%
^{l}(t_{-\kappa})=\mathbf{0},
\]
and for all $t>t_{-\kappa}$
\[
\mathbf{W}^{u}_{-\kappa}(t)\ge\mathbf{W}(t)\ge\mathbf{W}^{l}_{-\kappa}(t),
\]
where all the above inequalities hold coordinate-wise.

Note that we can evolve the ordered workload vectors of the two bounding
processes as follows: for $t_{n-1}\le t< t_{n}$ when $-\kappa<n\le-1$,
%\begin{align}\label{e:workload-at-arrival}
%\begin{split}
%\tilde{\bf V}^{u}_{-\kappa}(t_n)&=\mathcal{R}\left(\tilde{\bf V}^{u}_{-\kappa}(t_{n-1})+S_{n-1}{\bf e}-T_{n-1}{\bf f}\right)^+,\\
%\tilde{\bf V}^{l}_{-\kappa}(t_n)&=\mathcal{R}\left(\tilde{\bf V}^{l}_{-\kappa}(t_{n-1})+S_{n-1}{\bf e}-T_{n-1}{\bf f}\right)^+.
%\end{split}
%\end{align}
%And for $t_{n-1}< t\le t_n$ with $-\kappa<n\le-1$,%
\begin{align}
\label{e:workload-not-at-arrival}%
\begin{split}
\mathbf{W}_{-\kappa}^{u}(t)  &  =\mathcal{R}\left(  \mathbf{W}^{u}_{-\kappa
}(t_{n-1})+S_{n-1}\mathbf{e}-(t-t_{n-1})\mathbf{f}\right)  ^{+},\\
\mathbf{W}_{-\kappa}^{l}(t)  &  =\mathcal{R}\left(  \mathbf{W}^{l}_{-\kappa
}(t_{n-1})+S_{n-1}\mathbf{e}-(t-t_{n-1})\mathbf{f}\right)  ^{+}.
\end{split}
\end{align}

Similarly let $Q(t)$ denote the number of customers in the original FIFO
$GI/GI/c$ queueing process, starting from the infinite past, evaluated at time
$t$. For $t\ge t_{-\kappa}$, we let $Q^{u}_{-\kappa}(t)$ and $Q^{l}_{-\kappa
}(t)$ denote the number of customers in the upper and lower bound queueing
processes respectively, both initiated at the inspection time $t_{-\kappa}$,
evaluated at time $t$. If at some time $\tau\in[t_{-\kappa},0]$, we observe
that $\mathbf{W}^{u}_{-\kappa}(\tau)=\mathbf{W}^{l}_{-\kappa}(\tau)$, then it
must be true that $\mathbf{W}(\tau)=\mathbf{W}^{u}_{-\kappa}(\tau
)=\mathbf{W}^{l}_{-\kappa}(\tau)$ and $Q(\tau)=Q_{-\kappa}^{u}(\tau
)=Q_{-\kappa}^{l}(\tau)$ (because the ordered remaining workload vectors of
two bounding processes can only meet when they both have idle servers). We
call such time $\tau$ ``coalescence time" and from then on we have full
information of the target FIFO $GI/GI/c$ queue, hence we can continue simulate
it forward in time until time $0$.

However if coalescence does not happen by time $0$, we can adopt the so-called
``binary back-off" method by letting the arrival time $t_{-2\kappa}$ be our
new inspection time and redo the above procedure to detect coalescence.
Theorem 3.3 in \cite{C-K} ensures that for any $t_{-\kappa}\le t\le0$
\[
\mathbf{W}_{-\kappa}^{u}(t)\ge\mathbf{W}_{-2\kappa}^{u}(t)\ge\mathbf{W}%
(t)\ge\mathbf{W}_{-2\kappa}^{l}(t)\ge\mathbf{W}_{-\kappa}^{l}(t).
\]

We summarize the sandwiching algorithm as follows.

\begin{enumerate}
\item Simulate $\{(\mathbf{R}_{n}^{(r)},\mathbf{V}_{-n}^{0}):0\le n\le
\kappa\}$ with all data stored.

\item Use the stored data to reconstruct $\mathbf{V}_{n}^{0}$ forward in time
from $n=-\kappa$ until $n=0$, using Equation (\ref{e:from-past-recursion}),
and re-define $S_{j}$ as the $j^{th}$ service initiation used by the RA model.

\item Set $\mathbf{W}_{-\kappa}^{u}(t_{-\kappa})=\mathcal{R}(\mathbf{V}%
_{-\kappa}^{0})$ and $\mathbf{W}_{-\kappa}^{l}(t_{-\kappa})=\mathbf{0}$. Then
use the same stream of interarrival times $\left(  T_{-\kappa},T_{-\kappa
+1},\cdots,T_{-1}\right)  $ and service times $\left(  S_{-\kappa}%
,S_{-\kappa+1},\cdots,S_{-1}\right)  $ to simulate $\mathbf{W}^{u}_{-\kappa
}(t)$, $\mathbf{W}^{l}_{-\kappa}(t)$ forward in time using Equation
%(\ref{e:workload-at-arrival}) and
(\ref{e:workload-not-at-arrival}).

\item If at some time $t\in[t_{-\kappa},0]$ we detect $\mathbf{W}^{u}%
_{-\kappa}(t)=\mathbf{W}^{l}_{-\kappa}(t)$, set $\tau=t$, $\mathbf{W}%
(\tau)=\mathbf{W}^{u}_{-\kappa}(\tau)$, $Q(\tau)=\sum_{i=1}^{c}I(\mathbf{W}%
(\tau;i)>0)$, where $\mathbf{W}(t;i)$ is the $i$-th entry of vector
$\mathbf{W}(t)$. Then use the remaining interarrival times and service times
to evolve the original FIFO $GI/GI/c$ queue forward in time until time
$t_{0}=0$, output $(\mathbf{W}(0),Q(0))$ and stop.

\item If no coalescence is detected by time $0$, set $\kappa=2\kappa$, then
continue to simulate the backward RA $GI/GI/c$ process until $(-\kappa)$-th
arrival, i.e., $\{(\mathbf{R}_{n}^{(r)},\mathbf{V}_{-n}^{0}):0\le n\le
\kappa\}$, with all data stored. Go to Step 2.
\end{enumerate}

Next we analyze properties of the coalescence time. Define
\[
\kappa^{*}_{-}=\inf\left\{  n\ge0:\inf_{t_{-n}\le t\le0}\lVert\mathbf{W}%
^{u}_{-n}(t)-\mathbf{W}^{l}_{-n}(t)\rVert_{\infty}=0\right\}  .
\]
If at time $t_{-\kappa^{*}_{-}}$ we start an upper bound FIFO $GI/GI/c$ queue
with workload vector being $\mathbf{W}^{u}_{-\kappa_{-}^{*}}(t_{-\kappa
_{-}^{*}})$, and a lower bound FIFO $GI/GI/c$ queue with workload vector being
$\mathbf{0}$, they will coalesce by time $t_{0}=0$. Therefore if we simulate
the RA system backwards in time to $t_{-\kappa_{-}^{*}}$, we will be able to
detect a coalescence. We next show that $E(-t_{-\kappa_{-}^{*}})<\infty$.

By stationarity we have that $\kappa^{*}_{-}$ is equal in distribution to
\[
\kappa^{*}_{+}=\inf\left\{  n\ge0:\inf_{0\le t\le t_{n}}\lVert\mathbf{W}%
^{u}_{0}(t)-\mathbf{W}^{l}_{0}(t)\rVert_{\infty}=0\right\}  ,
\]
hence $-t_{-\kappa^{*}_{-}}\overset{d}{=}t_{\kappa^{*}_{+}}$.

\begin{prop}
\label{alg2-prop} If $\rho=E(S)/E(T)<c$ and there exists some $\epsilon>0$
such that $E(S^{2+\epsilon})<\infty$ and $E(T^{2+\epsilon})<\infty$, then
\[
E(t_{\kappa^{*}_{+}})<\infty.
\]

\end{prop}

The proof follows the same argument as in the proof of Proposition 3 in
\cite{B-D-P}, so we give a brief proof outline in Section \ref{proofs}.

\section{Continuous-time stationary constructions}

For a stable FIFO $GI/GI/1$ queue, let $D$ denote stationary customer delay
(time spent in queue (line)); i.e., it has the limiting distribution of
$D_{n+1} = (D_{n} + S_{n}-T_{n})^{+}$ as $n\to\infty$.

Independently, let $S_{e}$ denote a random variable distributed as the
\textit{equilibrium distribution} $G_{e}$ of service time distribution $G$,
\begin{equation}
G_{e}(x)=\mu\int_{0}^{x} P(S>y)dy,\ x\ge0,
\end{equation}
where $S\sim G$. Let $V(t)$ denote total work in system at time $t$; the sum
of all whole or remaining service times in the system at time $t$. $D_{n} = V
(t_{n}-)$, and one can construct $\{V(t)\}$ via
\[
V(t)=(D_{n} +S_{n}-(t-t_{n}))^{+},\ t_{n}\le t < t_{n+1}.
\]
(It is to be continuous from the right with left limits.) Let $V$ denote
stationary workload; i.e., it has the limiting distribution
\begin{equation}
P(V\le x)=\lim_{t\to\infty} {\frac{1}{t}} \int_{0}^{t} P(V(s)\le x)ds,\ x\ge0.
\end{equation}

The following is well known to hold (see Section 6.3 and 6.4 in \cite{KS-Book}%
, for example):
\begin{equation}
P(V >x)=\rho P(D+S_{e} >x),\ x\ge0.
\end{equation}

Letting $F_{D}(x)=P(D\le x)$ denote the probability distribution of $D$, and
$\delta_{0}$ denote the point mass at $0$, and $\ast$ denote convolution of
distributions, this means that the distribution of $V$ can be written as a
mixture
\[
(1-\rho)\delta_{0} +\rho F_{D}\ast G_{e}.
\]

This leads to the following:

\begin{prop}
For a stable $(0<\rho<1)$ FIFO $GI/GI/1$ queue, if $\rho$ is explicitly known,
and one can exactly simulate from $D$ and $G_{e}$, then one can exactly
simulate from $V$.
\end{prop}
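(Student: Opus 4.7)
The plan is to exploit the mixture representation stated just before the proposition, namely that the distribution of stationary workload $V$ equals
\[
(1-\rho)\delta_0 + \rho F_D \ast G_e,
\]
and to turn this into a direct three-line sampling algorithm. In words, with probability $1-\rho$ we output $0$, and with probability $\rho$ we output the sum of an independent draw from $F_D$ and an independent draw from $G_e$. Since the proposition assumes that $\rho$ is explicitly known and that we have exact samplers for $D$ and $G_e$, each of these steps is executable.

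Concretely, I would proceed as follows. First, I would restate the identity $P(V>x) = \rho\,P(D + S_e > x)$ and observe that, because the left-hand side at $x=0$ equals $\rho$ (so $P(V=0) = 1-\rho$) and the conditional law of $V$ given $V>0$ coincides with the law of $D+S_e$, we obtain the mixture decomposition $\mathcal{L}(V) = (1-\rho)\delta_0 + \rho(F_D \ast G_e)$. Second, I would give the algorithm: draw $U$ uniform on $(0,1)$; if $U \le 1-\rho$, set $V := 0$; otherwise draw $D$ from $F_D$ and $S_e$ from $G_e$ independently, and set $V := D + S_e$. Third, I would verify that the output has the correct distribution by a one-line computation checking $P(V\le x)$ matches the mixture.

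There is essentially no hard step: the entire content of the proposition is the observation that the mixture formula for $V$ is \emph{constructive}, so exact simulation from each component of the mixture immediately yields exact simulation from $V$. The only item worth flagging is the independence required between the draws of $D$ and $S_e$ in the convolution branch, which is automatic from the hypotheses of the proposition (they are sampled in independent subroutines). No truncation, rejection, or coupling argument is needed, so the proof is just a display of the sampling procedure together with a justification that its output law equals the mixture.
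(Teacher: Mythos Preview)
Your proposal is correct and essentially identical to the paper's own proof: both simply simulate a Bernoulli$(\rho)$ variable (you do it via a uniform threshold) to decide between outputting $0$ or outputting $D+S_e$ with $D$ and $S_e$ drawn independently. The paper presents the algorithm without your additional verification step, but the content is the same.
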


\begin{proof}
\begin{enumerate}
\item Simulate a Bernoulli $(\rho)$ r.v. B.

\item If $B=0$, then set $V=0$. Otherwise, if $B=1$, then: simulate $D$ and
independently simulate a copy $S_{e}\sim G_{e}$. Set $V = D + S_{e}$. Stop.
\end{enumerate}
\end{proof}

Another algorithm requiring instead the ability to simulate from $A_{e}$
(equilibrium distribution of the interarrival-time distribution $A$) instead
of $G_{e}$ follows from another known relation:%

\begin{equation}
\label{e:hooks-law-1}V\overset{d}{=} (D+S-T_{e})^{+},
\end{equation}
where $D, S$ and $T_{e}\sim A_{e}$ are independent (see, for example Equation
(88) on Page 426 in \cite{Wolff-Book}). Thus by simulating $D, S$, and $T_{e}%
$, simply set $V=(D+S-T_{e})^{+}$. Equation (\ref{e:hooks-law-1}) extends
analogously to the FIFO $GI/GI/c$ model, where our objective is to exactly
simulate from the time-stationary distribution of the continuous-time
Kiefer-Wolfowitz workload vector, $\mathbf{W}(t) = (W(t;1),\ldots
,W(t;c))^{T},\ t\ge0$, where it can be constructed via
\[
\mathbf{W}(t)=\mathcal{R}(\mathbf{W}_{n} +S_{n} \mathbf{e}-(t-t_{n}%
)\mathbf{f})^{+},\ t_{n}\le t<t_{n+1}.
\]
It is to be continuous from the right with left limits; $\mathbf{W}_{n} =
\mathbf{W}(t_{n}-)$. Total workload $V(t)$, for example, is obtained from this
via
\[
V(t)=\sum_{i=1}^{c} W(t;i).
\]

Letting $\mathbf{W}^{*}$ have the time-stationary distribution of
$\mathbf{W}(t)$ as $t\to\infty$, and letting $\mathbf{W}_{0}$ have the
discrete-time stationary distribution $\pi$ and letting $S$, $T_{e}$ and
$\mathbf{W}_{0}$ be independent, then
\begin{equation}
\label{e:time-versus-arrival}\mathbf{W}^{*}\overset{d}{=} \mathcal{R}%
(\mathbf{W}_{0}+S\mathbf{e}-T_{e}\mathbf{f})^{+}.
\end{equation}
So once we have a copy of $\mathbf{W}_{0}$ (distributed as $\pi$) from our
algorithm in Section~\ref{sub:algorithm-main-sketch} or
Section~\ref{s:algorithm-sandwich}, we can easily construct a copy of
$\mathbf{W}^{*}$ as long as we can simulate from $A_{e}$. Of course, if
arrivals are Poisson then the distribution of $\mathbf{W}^{*}$ is identical to
that of $\mathbf{W}_{0}$ by PASTA, but otherwise we can use
(\ref{e:time-versus-arrival}).

\subsection{Numerical Results}

%{\color{red}\cite{Hillier-Lo} Erlang queueing result...}

As a sanity check, we have implemented our perfect sampling algorithm in
Matlab for the case of $Erlang(k_{1},\lambda)/Erlang(k_{2},\mu)/c$ queue. We
provide our implementation codes for both algorithms in the online appendix of
this paper.

Firstly we consider $M/M/c$ queues, which are special cases of $Erlang(k_{1}%
,\lambda)/Erlang(k_{2},\mu)/c$ with $k_{1}=k_{2}=1$. For the quantity of
interest, number of customers in the FIFO $M/M/c$ queue at stationary, we
obtain its empirical distribution from a large number of independent runs of
our algorithm and compare it to the theoretical distribution which has a
well-established closed form
\begin{align*}
\pi_{0}  &  =\left(  \sum_{k=0}^{c-1}\frac{\rho^{k}}{k!}+\frac{\rho^{c}%
}{(c-1)!}\frac{1}{c-\rho}\right)  ^{-1},\\
\pi_{k}  &  =%
\begin{cases}
\pi_{0}\cdot\rho^{k}/k! & \text{if $0<k<c$}\\
\pi_{0}\cdot\rho^{k}c^{c-k}/c! & \text{if $k\ge c$}%
\end{cases}
,
\end{align*}
where $\rho=\lambda/\mu<c$.

As an example, Figure \ref{fig:valid-check-1} shows the result of such test
when $\lambda=3$, $\mu=2$ and $c=2$. Grey bars are the empirical results of
$5,000$ draws using our algorithm, and black bars are the theoretical
distribution number of customers in system from stationarity. A Pearson's
chi-squared test between the theoretical and empirical distributions gives a
$p$-value equal to $0.8781$, indicating close agreement (i.e., we cannot
reject the null-hypothesis that there is no difference between these two
distributions). For another set of parameters $\lambda=10$, $\mu=2$ and
$c=10$, the results are shown in Figure \ref{fig:valid-check-2} with a
$p$-value being $0.6069$ for the chi-squared fitness test.

For the general $Erlang(k_{1},\lambda)/Erlang(k_{2},\mu)/c$ queue when
$k_{1}>1$ and $k_{2}>1$ when $\rho/c=\lambda k_{2}/(c\mu k_{1})=0.9$, we
compare the empirical distribution of number of customers in system at
stationarity, obtained from a large number of runs of our perfect sampling
algorithm, to the numerical results (with precision at least $10^{-4}$)
provided in Table III of \cite{Hillier-Lo}. The results for an
$Erlang(2,9)/Erlang(2,5)/c$ queue are given in Figure
\ref{fig:valid-check-erlang}. Grey bars are the empirical results of $5,000$
draws using our algorithm and black bars are the numerical values given in
\cite{Hillier-Lo}, and they are very close to each other. The Pearson's
chi-squared test gives a $p$-value of $0.9464$, thus we cannot reject the
null-hypothesis that these two distributions agree well.

%Firstly we compared the theoretical distribution to the empirical distribution obtained from a large number of runs of our algorithm for different sets of parameter values, and they are all in close agreement. Figure \ref{fig:valid-check-1} show the result of such test when $\lambda=3$, $\mu=2$, $c=2$. Grey bars are the empirical results of 5,000 draws using our algorithm, and black bars are the theoretical distribution of number of customers in system. Figure \ref{fig:valid-check-2} gives another result with a different set of parameteres.

\begin{figure}[h]
\centering
\includegraphics[width=10cm]{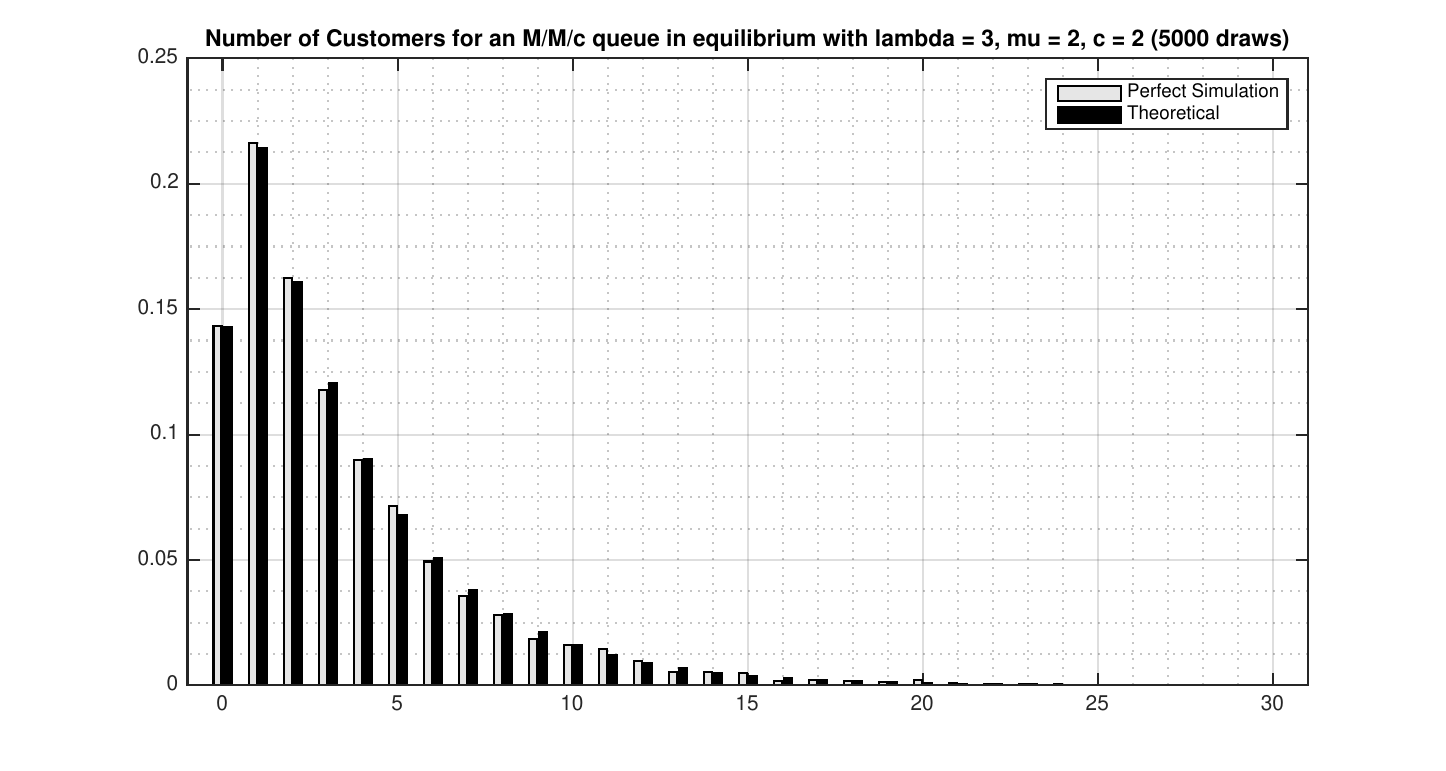}
%$p$-value = $0.8781$.}
\caption{Number of customers for an $M/M/c$ queue in stationarity when
$\lambda=3$, $\mu=2$, $c=2$. }%
\label{fig:valid-check-1}%
\end{figure}\begin{figure}[h]
\centering
\includegraphics[width=10cm]{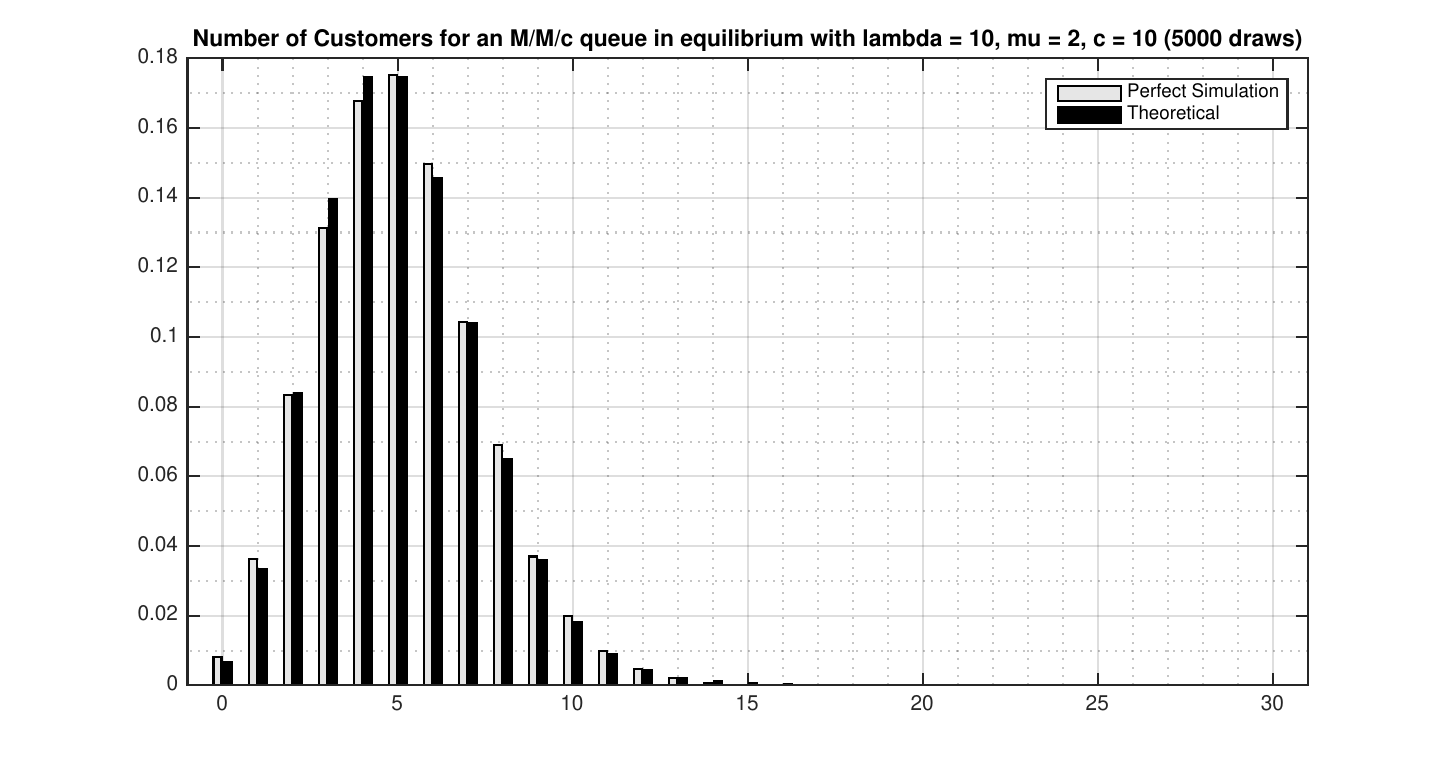}
%$p$-value = $0.6069$.}
\caption{Number of customers for an $M/M/c$ queue in stationarity when
$\lambda=10$, $\mu=2$, $c=10$.}%
\label{fig:valid-check-2}%
\end{figure}\begin{figure}[h]
\centering
\includegraphics[width=10cm]{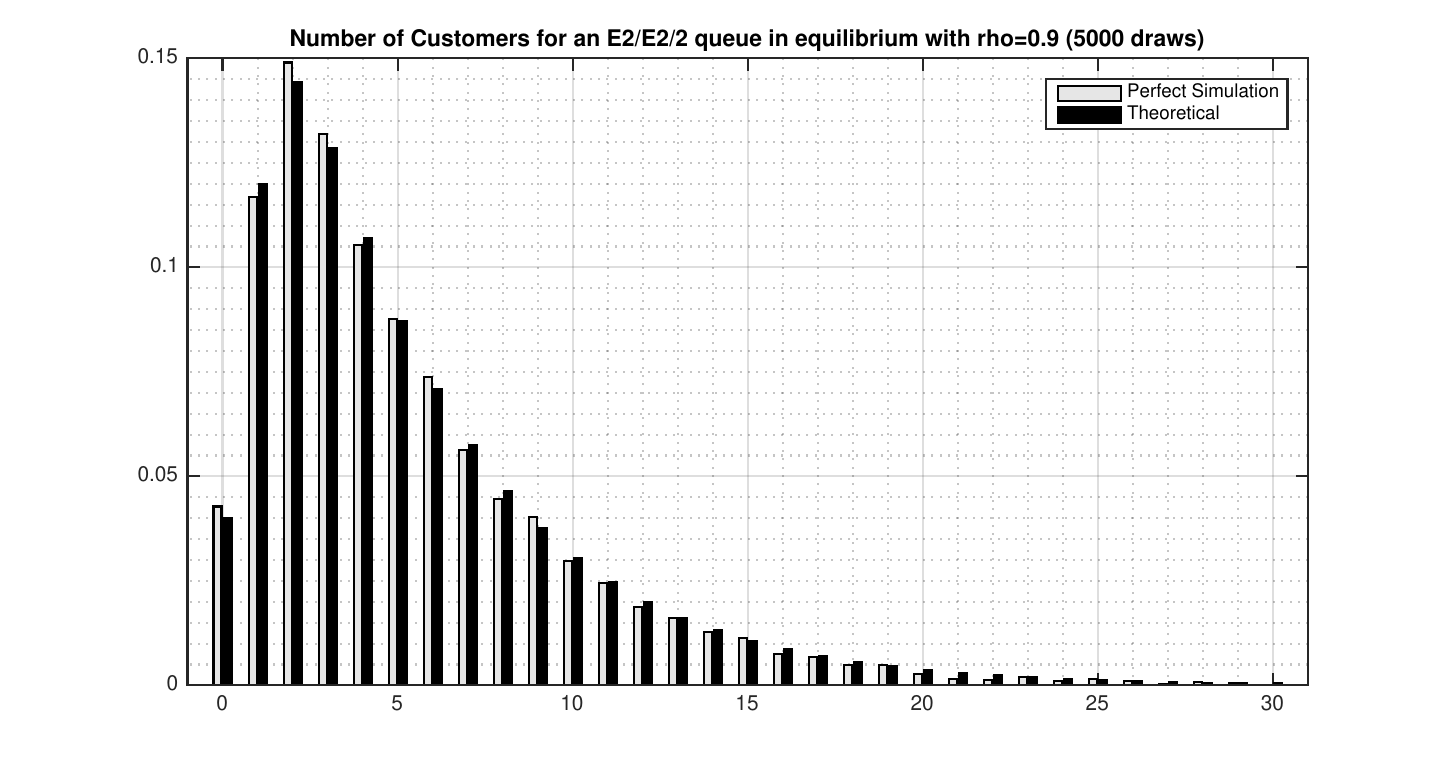}
%$p$-value = $0.9464$.}
\caption{Number of customers for an $Erlang(k_{1},\lambda)/Erlang(k_{2}%
,\mu)/c$ queue in stationarity when $k_{1}=2$, $\lambda=9$, $k_{2}=2$, $\mu
=5$, $c=2$ and $\rho/c=0.9$.}%
\label{fig:valid-check-erlang}%
\end{figure}

Next we run a numerical experiment comparing the computational efficiency of
the first algorithm in Section \ref{sub:algorithm-main-sketch} and the second
sandwiching algorithm in Section \ref{s:algorithm-sandwich}. We measure the
computational efficiency in two aspects. The first one is how far in the past
we need to simulate the dominating process to detect coalescence (counting the
total number of arrivals sampled backwards). The second aspect involves actual
computation time in seconds. Figure \ref{fig:2-alg-compare} depicts such a
comparison for an $M/M/c$ queue with parameters $\lambda=10$, $\mu=2$, $c=10$,
from $5000$ runs. Both results indicate that the second algorithm
(sandwiching) is significantly more efficient than the first one.

\begin{figure}[ptb]
\centering
\includegraphics[width=14cm]{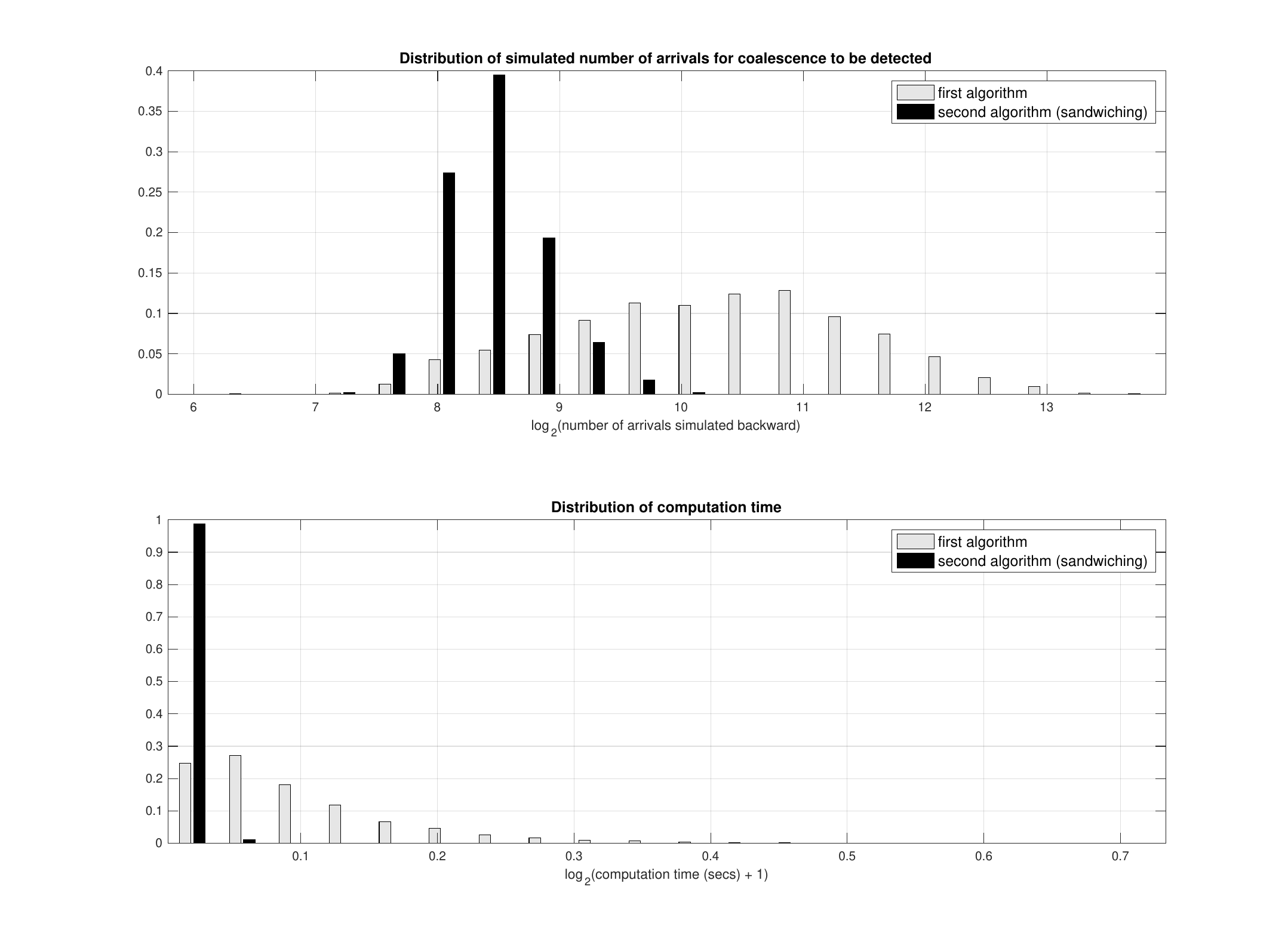}\caption{Computational
efficiency comparison between two algorithms for an $M/M/c$ queue}%
\label{fig:2-alg-compare}%
\end{figure}

Finally we study how the computational complexity of our sandwiching algorithm
compares to the algorithm given in \cite{B-D-P}. Notice these two algorithms
look similar: they both use back-off strategies to run two bounding processes
from some inspection time and check if they meet before time $0$. The
difference is that in \cite{B-D-P} they use a so-called \textquotedblleft
vacation system\textquotedblright\ to construct upper bound process, whereas
we use the same queue but under RA discipline instead. In the following
numerical experiment, we define the computational complexity as the total
number of arrivals each algorithm samples backwards to detect coalescence.
Table \ref{tab:computation-cmpx-test} shows how they vary with traffic
intensity, $\rho/c=\lambda/(c\mu)$, based on $5000$ independent runs of both
algorithms using the same back-off strategy with same initial $\kappa=1$. The
result suggests that our second algorithm (sandwiching) outperforms the one
proposed in \cite{B-D-P} as the magnitude of our computational complexity does
not increase as fast as theirs when traffic intensity increases.

\begin{table}[h]
\caption{simulation result for computational complexities with varying traffic
intensities}%
\label{tab:computation-cmpx-test}%
\centering
{\small $M/M/c$ queue with fixed $\mu=5$ and $c=2$} \bigskip
\par
\renewcommand{\arraystretch}{1.5}
\begin{tabular}{ | P{1cm} | P{1cm}| P{16em}| P{14em}|} \hline
\multirow{2}{.5cm}{$\lambda$} & \multirow{2}{.5cm}{$\rho/c$} &  \multicolumn{2}{|c|}{$95\%$ confidence interval of number of arrivals simulated backwards}\\
\cline{3-4}
& & Algorithm in Section \ref{s:algorithm-sandwich} & Algorithm in \cite{B-D-P}\\
\hline
5 & 0.5 & 54.8194 $\pm$ 0.5758 & 146.5618 $\pm$ 2.3598 \\
\hline
6 & 0.6 & 86.5394 $\pm$ 1.0536 & 308.4448 $\pm$ 4.9413  \\
\hline
7 & 0.7 & 152.6552 $\pm$ 2.2695 & 730.1130 $\pm$ 11.2783 \\
\hline
8 & 0.8 & 337.9544 $\pm$ 6.3021 & 2201.8254 $\pm$ 32.1556 \\
\hline
9 & 0.9 & 1521.3502 $\pm$ 31.8267	 & 12277.8686 $\pm$ 161.5824\\
\hline
\end{tabular}
\end{table}

\section{Why we can assume that interarrival times are bounded}

\label{sub: bounded-interarrival-times}

\begin{lemm}
\label{l:upper-bound-RRW} Consider the recursion
\begin{equation}
\label{e:RRW}D_{n+1}=(D_{n}+S_{n}-T_{n})^{+},\ n\ge0,
\end{equation}
where both $\{T_{n}\}$ and $\{S_{n}\}$ are non-negative random variables, and
$D_{0}=0$.

Suppose for another sequence of non-negative random variables $\{{\hat T}%
_{n}\}$, it holds that
\[
P({\hat T}_{n}\le T_{n},\ n\ge0)=1.
\]
Then for the recursion
\begin{equation}
\label{e:RRW-bounding}{\hat D}_{n+1}=({\hat D}_{n}+S_{n}-{\hat T}_{n}%
)^{+},\ n\ge0,
\end{equation}
with ${\hat D}_{0}=0$, it holds that
\begin{equation}
\label{e:RRW-bound}P(D_{n}\le{\hat D}_{n},\ n\ge0)=1.
\end{equation}

\end{lemm}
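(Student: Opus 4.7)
The plan is to prove the inequality coordinate-wise (scalar-wise here, since these are one-dimensional recursions) by a straightforward induction on $n$, exploiting the monotonicity of the map $x \mapsto (x + a - b)^+$ in $x$ (nondecreasing) and in $b$ (nonincreasing).

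First I would set up the almost sure framework: work on the probability-one event on which $\hat{T}_n \le T_n$ holds for every $n \ge 0$. On that event I would prove the pathwise statement $D_n \le \hat{D}_n$ for all $n \ge 0$ by induction; combined with countable intersection (a single almost sure event already suffices), this yields \eqref{e:RRW-bound}.

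For the induction, the base case $n = 0$ is immediate because $D_0 = \hat{D}_0 = 0$. For the inductive step, assume $D_n \le \hat{D}_n$. Then using $-T_n \le -\hat{T}_n$ and $D_n \le \hat{D}_n$,
\begin{equation*}
D_n + S_n - T_n \;\le\; \hat{D}_n + S_n - \hat{T}_n.
\end{equation*}
Since $x \mapsto x^+ = \max(x,0)$ is nondecreasing, applying $(\cdot)^+$ to both sides preserves the inequality, giving $D_{n+1} \le \hat{D}_{n+1}$ and closing the induction.

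There is essentially no obstacle here: the argument is a clean monotonicity/induction argument of the type standard for Lindley-style recursions (indeed it is a one-line variant of Loynes' monotonicity). The only thing to be careful about is a clean statement that the almost sure domination of the driving sequences transfers to an almost sure domination of the output sequences, which follows because a countable intersection of probability-one events has probability one.
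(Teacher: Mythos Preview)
Your proposal is correct and matches the paper's proof essentially line for line: both argue by induction on $n$, using the monotonicity of $x\mapsto x^+$ together with $D_n\le \hat D_n$ and $-T_n\le -\hat T_n$ to pass from step $n$ to step $n+1$. There is nothing to add.
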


\begin{proof}
The proof is by induction on $n\ge0$: Because (w.p.1 in the following
arguments) ${\hat T}_{0}\le T_{0}$, we have
\[
D_{1}=(S_{0}-T_{0})^{+}\le(S_{0}-{\hat T}_{0})^{+}={\hat D}_{1}.
\]
Now suppose the result holds for some $n\ge0$. Then $D_{n}\le{\hat D}_{n}$ and
by assumption ${\hat T}_{n}\le T_{n}$; hence
\[
D_{n+1}=(D_{n}+S_{n}-T_{n})^{+}\le({\hat D}_{n}+S_{n}-{\hat T}_{n})^{+}={\hat
D}_{n+1},
\]
and the proof is complete.
\end{proof}

\begin{prop}
Consider the stable RA $GI/GI/c$ model in which $P(T>S)>0$. In order to use
this model to simulate from the corresponding stationary distribution of the
FIFO $GI/GI/c$ model as explained in the
Section~\ref{sub:algorithm-main-sketch}, without loss of generality we can
assume that the interarrival times $\{T_{n}\}$ are bounded: There exists $b>0$
such that
\[
P(T_{n}\le b,\ n\ge0)=1.
\]

\end{prop}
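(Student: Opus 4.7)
My plan is to truncate the interarrival times and exploit Lemma~\ref{l:upper-bound-RRW} (at each node of the RA model) to show the truncated model serves as an upper bound whose regeneration times work for the algorithm of Section~\ref{sub:algorithm-main-sketch}. Set $\hat{T}_n=\min(T_n,b)$ for a constant $b>0$ to be chosen, coupled so that $\hat{T}_n\le T_n$ a.s.

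First I would choose $b$ large enough to preserve the two properties the algorithm needs for the dominating RA model: stability at each node, $E[S]/c<E[\hat{T}]$, and the emptying condition $P(\hat{T}>S)>0$. Since $E[\hat{T}]=E[\min(T,b)]\uparrow E[T]>E[S]/c$ as $b\to\infty$ by monotone convergence, some threshold $b_1$ makes the first hold for all $b\ge b_1$. For the second, observe $\{\hat{T}>S\}=\{T>S\}\cap\{S<b\}$, and $P(T>S,S<b)\uparrow P(T>S)>0$ as $b\to\infty$, so there is a threshold $b_2$ past which the emptying condition is satisfied. Fix any $b\ge\max(b_1,b_2)$.

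Next I would transfer the forward-time domination given by Lemma~\ref{l:upper-bound-RRW} to the stationary (Loynes) construction. At each node $i$, the recursions (\ref{e:RA-RRW-marginals}) driven by $T_n$ and by $\hat{T}_n$ fit the hypothesis of Lemma~\ref{l:upper-bound-RRW}, so if $\hat{V}_n(i)$ denotes the workload under $\hat{T}_n$ and $V_n(i)$ under $T_n$ (both started empty, common $S_n(i)$), then $V_n(i)\le\hat{V}_n(i)$ for all $n$, a.s. The stationary Loynes' version admits the explicit representation
$$V^0_{-n}(i)=\max_{m\ge n}\sum_{j=n+1}^{m}\bigl(S_{-j}(i)-T_{-j}\bigr),$$
which is termwise non-increasing in each $T_{-j}$; with $\hat{V}^0_{-n}(i)$ defined by the same formula using $\hat{T}_{-j}$ in place of $T_{-j}$, the pointwise inequality $\hat{T}_{-j}\le T_{-j}$ gives $V^0_{-n}(i)\le\hat{V}^0_{-n}(i)$ a.s., simultaneously for all $n\ge 0$ and all $i$.

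Consequently, any $n$ with $\hat{\mathbf{V}}^0_{-n}=\mathbf{0}$ automatically satisfies $\mathbf{V}^0_{-n}=\mathbf{0}$, and by the stationary version of Lemma~\ref{l:main-inequality} the coupled FIFO Kiefer--Wolfowitz vector $\mathbf{W}^0_{-n}$ is also $\mathbf{0}$. Hence the algorithm of Section~\ref{sub:algorithm-main-sketch} can be executed with $\hat{T}_n$ driving the dominating RA process; once a regeneration index $-\hat{N}=\min\{n\ge 0:\hat{\mathbf{V}}^0_{-n}=\mathbf{0}\}$ is found, the FIFO model is reconstructed forward from $\mathbf{W}^0_{-\hat{N}}=\mathbf{0}$ using the retained original interarrival times $T_{-\hat{N}},\ldots,T_{-1}$ together with the service initiations supplied by Lemma~\ref{l: service-times-RA}. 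The one place requiring care is the simultaneous enforcement of stability and the emptying condition at the same $b$; the monotone convergence arguments above show both thresholds are finite, and any $b$ beyond their maximum suffices.
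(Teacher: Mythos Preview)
Your argument is correct and follows essentially the same route as the paper: truncate $T_n$ at a level $b$ large enough to preserve both stability and $P(\hat T>S)>0$, invoke Lemma~\ref{l:upper-bound-RRW} coordinate-wise to get $\mathbf{V}_n\le\hat{\mathbf{V}}_n$, and conclude that the emptying time $\hat N$ of the truncated RA model is a valid coalescence time for the original RA and hence FIFO models. Your monotone-convergence justification for the choice of $b$ and the explicit Loynes-formula monotonicity argument are more detailed than the paper's, which simply asserts that $b$ can be so chosen and cites Lemma~\ref{l:upper-bound-RRW}.

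One point where the paper is more careful than your write-up: in the final reconstruction step you say the FIFO model is rebuilt ``using the retained original interarrival times \dots\ together with the service initiations supplied by Lemma~\ref{l: service-times-RA},'' but you do not specify \emph{which} RA model supplies those initiations. The paper is explicit that one must reconstruct the \emph{original} (untruncated) RA model forward from empty at time $-\hat N$ to harvest the $S_n$ in the correct initiation order for Lemma~\ref{l:main-inequality}; it even adds a remark explaining that the truncated $\widehat{RA}$ model generally initiates services in a different order, so its initiation sequence cannot obviously be fed to the FIFO recursion. Your sentence is consistent with the right interpretation but leaves this subtlety implicit.
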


\begin{proof}
By stability, $cE(T)>E(S)$, and by assumption $P(T>S)>0$. If the $\{T_{n}\}$
are not bounded, then for $b>0$, define ${\hat T}_{n}=\min\{T_{n},
b\},\ n\ge0$; truncated $T_{n}$. Choose $b$ sufficiently large so that
$cE({\hat T})>E(S)$ and $P({\hat T}>S)>0$ still holds. Now use the $\{{\hat
T}_{n}\}$ in place of the $\{T_{n}\}$ to construct an RA model, denoted by
$\widehat{RA}$. Denote this by
\[
\mathbf{{\hat V}}_{n} = ({\hat V}_{n}(1),\ldots, {\hat V}_{n} (c)),
\]
where it satisfies the recursion (\ref{e:RA-RRW-vector}) in the form
\[
\mathbf{{\hat V}}_{n+1} = (\mathbf{{\hat V}}_{n} + \mathbf{\tilde{S}}%
_{n}-\mathbf{{\hat T}}_{n})^{+},\ n\ge0,
\]
where $\mathbf{\hat{T}}_{n}=\hat{T}_{n}\cdot\mathbf{f}$.

Starting from $\mathbf{V}_{0}=\mathbf{\hat{V}}_{0}=\mathbf{0}$, then from
Lemma~\ref{l:upper-bound-RRW}, it holds (coordinate-wise) that
\[
\mathbf{V}_{n}\le\mathbf{{\hat V}}_{n},\ n\ge0,
\]
and thus, if for some $n\ge0$ it holds that $\mathbf{{\hat V}}_{n}=\mathbf{0}%
$, then $\mathbf{V}_{n}=\mathbf{0}$ and hence $\mathbf{W}_{n}=\mathbf{0}$ (as
explained in our previous section). Since $b$ was chosen ensuring that
$cE({\hat T})>E(S)$ and $P({\hat T}>S)>0$, $\{\mathbf{{\hat V}}_{n}\}$ is a
stable RA $GI/GI/c$ queue that will indeed empty infinitely often. Thus we can
use it to do the backwards in discrete-time stationary construction until it
empties, at time (say) $-{\hat N}$; ${\hat N}=\min\{n\ge0: \mathbf{{\hat V}%
}_{-n}=0\}$. Then, we can re-construct the original RA model (starting empty
at time $-{\hat N}$) using the (original untruncated) ${\hat N}$ interarrival
times $(T_{-{\hat N}},T_{-{\hat N}+1},..., T_{-1})$ in lieu of $({\hat
T}_{-{\hat N}},{\hat T}_{-{\hat N}+1},..., {\hat T}_{-1})$, so as to collect
${\hat N}$ re-ordered $S_{n}$ needed in construction of $\mathbf{W}_{0}$ for
the FIFO model.
\end{proof}

\begin{rem}
\textrm{One would expect that the reconstruction of the original RA model in
the above proof is unnecessary, that instead we only need to re-construct the
$\widehat{RA}$ model until we have ${\hat N}$ service initiations from it, as
opposed to ${\hat N}$ service initiations from the original RA model. Although
this might be true, the subtle problem is that the order in which service
times are initiated in the ${ \widehat{RA}}$ model will typically be different
than for the original RA model; they have different arrival processes
(counterexamples are easy to construct). Thus it is not clear how one can
utilize Lemma~\ref{l:main-inequality} and Lemma~\ref{l: service-times-RA} and
so on. One would need to generalize Lemma~\ref{l:main-inequality} to account
for truncated arrival times used in the RA model, but not the FIFO model, in
perhaps a form such as a variation of Equation
(\ref{e:main-inequality-arrival-times}),
\begin{equation}
\label{e:main-inequality-arrival-times-different}P(Q_{F}(t_{n}-)\le
Q_{\widehat{RA}}({\hat t}_{n}-),\ \hbox{for all}\ n\ge0)=1,
\end{equation}
where $\{{\hat t}_{n}\}$ is the truncated renewal process. We did not explore
this further.}
\end{rem}

\section{Infinite server systems and other service disciplines}

In this Section we sketch how one can utilize our FIFO $GI/GI/c$ results to
obtain exact sampling of some other models including the infinite server
queue, and the multi-server queue under other disciplines.

In \cite{B-D} an exact simulation algorithm is presented for simulating from
the stationary distribution of the infinite server queue; the $GI/GI/\infty$.
Here we sketch how to utilize our new FIFO $GI/GI/c$ results to accomplish
this by using a FIFO $GI/GI/c$ model as an upper bound. The $GI/GI/\infty$
model has an infinite number of servers, there is no line, every arrival
enters service immediately upon arrival; the $n^{th}$ customer arrives at time
$t_{n}$ and departs at time $t_{n}+S_{n}$.

For $0<\rho=\lambda/\mu<\infty$, this model is always stable. Let $c$ denote
the smallest integer strictly larger than $\rho$; $c-1\leq\rho<c$. Note that
any $c>\rho$ can be chosen for this construction. A larger value of $c$ will
result in a smaller number of arrivals necessary to detect coalescence, up to
a certain point, since there will be less congestion in the bounding systems
and the customers will leave faster. On the other hand, the actual simulation
time may increase just as a consequence of simulating a $c$ dimensional random
walk. We suggest a rule consistent with square-root staffing, $c=\rho
+\sqrt{\rho}$, since this is well known to trade quality and capacity costs,
which in our setting precisely translate to trading faster coalescence with
cost-per-replication costs (see \cite{H-W}).

Letting $V_{\infty}(t)$ denote the total amount of work in the $GI/GI/\infty$
model, and $V_{c}(t)$ denote the total amount of work in the (necessarily
stable) FIFO $GI/GI/c$ model being fed exactly the same input (of service
times and interarrival times), and both starting initially empty, the
following is easily established:%

\begin{equation}
P(V_{\infty}(t)\le V_{c}(t),\ \hbox{for all}\ t\ge0)=1,
\end{equation}
hence
\begin{equation}
P(V_{\infty}(t_{n}-)\le V_{c}(t_{n}-),\ \hbox{for all}\ n\ge0)=1.
\end{equation}

(Note that both models use the service times in the same order of initiation,
which makes the coupling easy from the start.)

Thus, if, for example $P(T>S)>0$, then the FIFO model will empty and can be
used to detect times when the $GI/GI/\infty$ model will empty. Let $L_{\infty
}(t_{n}-)$ denote the total number of busy servers in the $GI/GI/\infty$ model
as found by $C_{n}$.

Simulating the FIFO model backwards in time in stationarity (using our
previous algorithm), until it first empties, can then be used to detect a time
when the $GI/GI/\infty$ model is empty, and then one can construct it back up
to time $t=0$ to obtain a stationary copy of $V_{\infty}(t_{n}-)$ and of
$L_{\infty}(t_{n}-)$.

Now we consider alternatives disciplines to FIFO for the $GI/GI/c$ model. It
is immediate that when service times are generated only when needed by a
server, the total number of customers in the system process $\{Q(t)\}$ remains
the same under FIFO as under last-in-first-out (LIFO) in which the next
customer to enter service is the one at the bottom of the line, or random
selection next (RS) in which the next customer to enter service from the line
is selected at random by the server. Thus, they all share the same stationary
distribution of $Q(t)$ as $t\to\infty$, as well as the stationary distribution
of $Q(t_{n}-)$ as $n\to\infty$. Let $Q_{0}$ have this limiting (as $n\to
\infty$) distribution. This fact can be used to exactly simulate, for example,
stationary delay $D$ under LIFO or RS (they are not the same as for FIFO). The
method (sketch) is as follows: Simulate a copy of $Q_{0}$, jointly with the
remaining service times of those in service, by assuming FIFO. This represents
the distribution of the system as found in stationarity (at time $0$) by
arrival $C_{0}$. Consider RS for example. If the line is empty, then define
$D_{RS}=0$; $C_{0}$ enters service immediately. Otherwise, place $C_{0}$ in
the line, and continue simulating but now using RS instead of FIFO. As soon as
$C_{0}$ enters service, stop and define $D_{RS}$ as that length of time.

\section{Fork-Join Models}

The RA recursion (\ref{e:RA-RRW-vector}),
\begin{equation}
\label{e:FJ}\mathbf{V}_{n+1} =(\mathbf{V}_{n} +\mathbf{{S}}_{n} -\mathbf{T}%
_{n})^{+},\ n\ge0,
\end{equation}
is actually a special case for the modeling of \textit{Fork-Join} (FJ) queues
(also called \textit{Split and Match}) with $c$ nodes. In an FJ model, each
arrival is a ``job" with $c$ components, the $i^{th}$ component requiring
service at the $i^{th}$ FIFO queue. So upon arrival at time $t_{n}$, the job
splits into its $c$ components to be served. As soon as all $c$ components
have completed service, then and only then, does the job depart. Such models
are useful in manufacturing applications. The $n^{th}$ job ($C_{n}$) thus
arrives with a service time vector attached of the form $\mathbf{S}_{n} =
({S}_{n}(1), . . . ,{S}_{n}(c))$. Let us assume that the vectors are iid, but
otherwise each vector's coordinates can have a general joint distribution; for
then (\ref{e:FJ}) still forms a Markov chain. We will denote this model as the
$GI/GI/c-FJ$ model. The sojourn time of the $i^{th}$ component is given by
$V_{n}(i)+S_{n}(i)$, and thus the sojourn time of the $n^{th}$ job, $C_{n}$,
is given by
\begin{equation}
\label{e:FJ-sojourn}H_{n}=\max_{1\le i\le c}\{V_{n}(i)+S_{n}(i)\}.
\end{equation}
Of great interest is obtaining the limiting distribution of $H_{n}$ as
$n\to\infty$; we denote a r.v. with this distribution as $H^{0}$. FJ models
are notoriously difficult to analyze analytically: Even the special case of
Poisson arrivals and i.i.d. exponential service times is non-trivial because
of the dependency of the $c$ queues through the common arrival process. (A
classic paper is Flatto \cite{Flatto}). In fact when $c\ge3$, only bounds and
approximations are available. As for exact simulation, there is a paper by
Hongsheng Dai \cite{HD}, in which Poisson arrivals and independent exponential
service times are assumed. Because of the continuous-time Markov chain (CTMC)
model structure, the author is able to construct (simulate) the time-reversed
CTMC to use in a coupling from the past algorithm. But with general renewal
arrivals and or general distribution service times, such CTMC methods no
longer can be used.

Our simulation method for the RA model outlined in
Section~\ref{sub:RA-simulation-exact}, however yields an exact copy of $H^{0}$
for the general $GI/GI/c-FJ$ model, under the condition that there exists
$\mathbf{\theta}>\mathbf{0}$, $\mathbf{\theta}\in\mathbb{R}^{c}$ such that
\[
E(\exp(\mathbf{\theta}^{T}(\mathbf{S}_{1}-\mathbf{T}_{1})))<\infty.
\]
%Cramer's condition, i.e. for all $i=1,\ldots,c$, there exists $\theta_i^*\in(0,\infty)$ such that $\psi_i(\theta_i^*)=\log E\exp(\theta_i^*(S(i)-T))=0$.
First we simulate $\mathbf{V}_{0}^{0}$ exactly using exponential change of
measure method introduced in \cite{B-C} (we use the same technique for
multidimensional simulation in Algorithm \ref{sub:algorithm-main-sketch}),
then simulate a vector of service times $\mathbf{S}=(S(1),\ldots,S(c))$
independently and set
\[
H^{0}=\max_{1\le i\le c}\{V_{0}^{0}(i)+S(i)\}.
\]

%{\color{red}Our simulation method for the RA model outlined in Section~\ref{sub:RA-simulation-exact}, however
%(modified to accommodate generally jointly distributed service time vectors under suitable conditions), yields an  exact copy of $H^0$ for the general $GI/GI/c-FJ$ model via first exactly simulating
%${\bf V}_0^0$ then simulating, independently, a vector of service times ${\bf S}=(S(1), . . . , S(c))$ and setting
%$$H^0=\max_{1\le i\le c}\{V_0^0(i)+S(i)\}.$$}

Even when the service time components within $\mathbf{S}$ are independent, or
the case when service time distributions are assumed to have a finite moment
generating function (in a neighborhood of the origin), such results are new
and non-trivial.

\section{The case when $P(T>S)=0$: Harris recurrent regeneration}

\label{s:Harris-Chain} For a stable FIFO $GI/GI/c$ queue, the stability
condition can be re-written as $E(T_{1}+\cdots+ T_{c})> E(S)$, which implies
also that $P(T_{1}+\cdots+ T_{c}>S)>0$. Thus assuming that $P(T>S)>0$ is not
necessary for stability. When $P(T>S)=0$, the system will never empty again
after starting, and so using consecutive visits to $\mathbf{0}$ as
regeneration points is not possible. But the system does regenerate in a more
general way via the use of Harris recurrent Markov chain theory; see
\cite{KS-1988} for details and history of this approach. The main idea is that
while the system will not empty infinitely often, the number in system process
$\{Q_{F}(t_{n}-):n\ge0\}$ will visit an integer $1\le j\le c-1$ infinitely often.

For illustration here, we will consider the $c=2$ case (for the general case
$c\ge2$ the specific regeneration points analogous to what we present here are
carefully given in Equation (4.6) on page 396 of \cite{KS-1988}). Let assume
that $1<\rho<2$. (Note that if $\rho<1$, then equivalently $E(T)>E(S)$ and so
$P(T>S)>0$; that is why we rule out $\rho<1$ here.) We now assume that
$P(T>S)=0$. This implies that for ${\underline{s}}\triangleq\inf\{s>0:
P(S>s)>0\}$ and ${\overline t}\triangleq\sup\{t>0: P(T>t)>0\}$, we must have
$0<{\overline t}<{\underline{s}}<\infty$. It is shown in \cite{KS-1988} that
for $\epsilon>0$ sufficiently small, the following event will happen
infinitely often (in $n$) with probability $1$,%

\begin{equation}
\label{e:RA-Pre-reg}\{Q_{RA}(t_{n}-)=1,\ V_{n}(1)=0,\ V_{n}(2)\le
\epsilon,\ T_{n}>\epsilon,\ U_{n}=1\}.
\end{equation}

If $n$ is such a time, then at time $n+1$, we have%

\begin{equation}
\label{e:RA-regeneration-point}\{Q_{RA}(t_{n+1}-)=1,\ V_{n+1}(2)=0,\ V_{n+1}%
(1)=(S_{n}-T_{n})\}.
\end{equation}

The point is that $C_{n}$ finds one server (server 1) empty, and the other
queue with only one customer in it, and that customer is in service with a
remaining service time $\le\epsilon$. $C_{n}$ then enters service at node $1$
with service time $S_{n}$; but since $T_{n}>\epsilon$, $C_{n+1}$ arrives
finding the second queue empty, and the first server has remaining service
time $S_{n}-T_{n}$ conditional on $T_{n}>\epsilon$. Under the coupling of
Lemma~\ref{l:main-inequality}, the same will be so for the FIFO model (see
Remark~\ref{rem:coupling-work-application} below): At such a time $n$,
\begin{equation}
\label{e:FIFO-Pre-reg}\{Q_{F}(t_{n}-)=1,\ W_{n}(1)=0,\ W_{n}(2)\le
\epsilon,\ T_{n}>\epsilon\},
\end{equation}
and at time $n+1$ we have
\begin{equation}
\label{e:FIFO-regeneration-point}\{Q_{F}(t_{n+1}-)=1,\ W_{n}(1)=0,\ W_{n}%
(2)=(S_{n}-T_{n})\}.
\end{equation}

Equations (\ref{e:RA-regeneration-point}) and (\ref{e:FIFO-regeneration-point}%
) define positive recurrent regeneration points for the two models (at time
$n+1$); the consecutive times at which regenerations occur form a
(discrete-time) positive recurrent renewal process, see \cite{KS-1988}.

To put this to use, we change the stopping time $N$ given in
(\ref{eq-stopping-time}) to:%

\begin{align}
\label{e:main-stopping-time-regenerate}N+1  &  =\min\{ n\ge1: Q_{RA}%
^{0}(t_{-(n+1) }-)=1,\ V_{-(n+1)}^{0}(1)=0,\\
&  V_{-(n+1)}^{0}(2)\le\epsilon,\ T_{-(n+1)} > \epsilon,\ U_{-(n+1)}=1
\}.\nonumber
\end{align}
Then we do our reconstructions for the algorithm in
Section~\ref{sub:algorithm-main-sketch} by starting at time $-N$, with both
models starting with the same starting value
\begin{equation}
\label{e:RA-regeneration-point-sim}\{Q_{RA}(t_{-N}-)=1,\ V_{-N}^{0}%
(2)=0,\ V_{-N}^{0}(1)=(S_{-(N+1)}-T_{-(N+1)})\ | \ T_{-(N+1)}>\epsilon\}
\end{equation}

\begin{equation}
\label{e:FIFO-regeneration-point-sim}\{Q_{F}(t_{-N}-)=1,\ W_{-N}%
(1)=0,\ W_{-N}(2)=(S_{-(N+1)}-T_{-(N+1)})\ | \ T_{-(N+1)}>\epsilon\}.
\end{equation}

\begin{rem}
\label{rem:coupling-work-application}

\textrm{The service time used in (\ref{e:RA-regeneration-point-sim}) and
(\ref{e:FIFO-regeneration-point-sim}) for coupling via
Lemma~\ref{l: service-times-RA}, $S_{-(N+1)},$ is in fact identical for both
systems because (subtle): At time $-(N+1)$, both systems have only one
customer in system, and thus total work is in fact equal to the remaining
service time; so we use Equation (\ref{e:main-inequality-work-arrival-times})
to conclude that both remaining service times (even if different) are
$\leq\epsilon$ (e.g., that is why (\ref{e:FIFO-Pre-reg}) follow from
(\ref{e:RA-Pre-reg})). Meanwhile, $C_{-(N+1)}$ enters service immediately
across both systems, so it is indeed the same service time $S_{-(N+1)}$ used
for both for this initiation. Coalescence is detected in finite expected time
because of the positive recurrence property underlying the definition of the
regeneration points from (\ref{e:RA-regeneration-point}) and
(\ref{e:FIFO-regeneration-point})}.
\end{rem}

%\appendix\label{s:Appendix}
\appendix

\section{Appendices}

\subsection{Detailed algorithm steps in Section
\ref{sub:algorithm-main-sketch}}

\label{appendix-simulation-algo}

%\section{Appendix}\label{s:Appendix}
%\section*{Appendix}
%\addcontentsline{toc}{section}{Appendices}
%\renewcommand{\thesubsection}{\Alph{subsection}}
%\subsection{Detailed algorithm steps in Section \ref{sub:algorithm-main-sketch}}

To simulate the process $\{(\mathbf{R}_{n}^{(r)},\mathbf{V}_{-n}^{0}):0\leq
n\leq N\}$ with the time $N$ defined in (\ref{eq-stopping-time}) as
\[
N=\inf\left\{  n\geq0:\mathbf{V}_{-n}^{0}=\max_{k\geq n}\mathbf{R}_{k}%
^{(r)}-\mathbf{R}_{n}^{(r)}=\mathbf{0}\right\}  ,
\]
we must sample the running time maxima (entry by entry) of the $c$-dimensional
random walk
\[
\mathbf{R}_{n}^{(r)}=\sum_{i=1}^{n}\mathbf{\Delta}_{-i}=\sum_{i=1}^{n}%
(\tilde{\mathbf{S}}_{-i}-\mathbf{T}_{-i})~~~~n\geq0.
\]
We will find a sequence of random times $\{N_{n}:n\geq1\}$ such that
$\max_{n\leq k\leq N_{n}}\mathbf{R}_{k}^{(r)}\geq\max_{k\geq N_{n}}%
\mathbf{R}_{k}^{(r)}$. Hence, we will be able to find the running time maxima
by only sampling the random walk on a finite time interval, i.e., $N_{n}$ is
such that
\[
\max_{k\geq n}\mathbf{R}_{k}^{(r)}=\max_{n\leq k\leq N_{n}}\mathbf{R}%
_{k}^{(r)}.
\]
To achieve this, we first decompose the random walk into two random walks and
then construct a sequence of \textquotedblleft milestone\textquotedblright%
\ events for each of these two random walks to detect $N_{n}$'s. We will
elaborate the detailed implementations in the following context.

Because of the stability condition $\rho=\lambda/\mu<c$, we can find some
value $a\in(1/\mu,c/\lambda)$.
%{\color{red}such that $P\left(T>a\right)>0$}.
For any $n\ge0$, define
\begin{align}
\mathbf{X}_{-n}  &  =\sum_{j=1}^{n}\left(  S_{-j}-a\right)  \mathbf{U}_{-j}
,\label{e:X-def}\\
\mathbf{Y}_{-n}  &  =\sum_{j=1}^{n}\left(  a\mathbf{U}_{-j}-\mathbf{T}%
_{-j}\right)  , \label{e:Y-def}%
\end{align}
hence $\mathbf{R}^{(r)}_{n}=\sum_{j=1}^{n}\mathbf{\Delta}_{-j}=\mathbf{X}%
_{-n}+\mathbf{Y}_{-n}$ and $\max_{k\ge n}\mathbf{R}_{k}^{(r)}=\max_{k\ge
n}(\mathbf{X}_{-n}+\mathbf{Y}_{-n})$.

For all $n\ge0$, let
\begin{align}
N_{n}^{X}  &  =\inf\{n^{\prime}\ge n:\max_{k\ge n^{\prime}}\mathbf{X}_{-k}%
\le\mathbf{X}_{-n}\},\label{running-finite-check-X}\\
N_{n}^{Y}  &  =\inf\{n^{\prime}\ge n:\max_{k\ge n^{\prime}}\mathbf{Y}_{-k}%
\le\mathbf{Y}_{-n}\},\label{running-finite-check-Y}\\
N_{n}  &  =\max\{N_{n}^{X},N_{n}^{Y}\} . \label{running-finite-check}%
\end{align}
Then, by the definitions above,
\[
\max_{k\ge N_{n}}\mathbf{R}_{k}^{(r)}\le\max_{k\ge N_{n}}\mathbf{X}_{-k}%
+\max_{k\ge N_{n}}\mathbf{Y}_{-k}\le\mathbf{X}_{-n}+\mathbf{Y}_{-n}%
=\mathbf{R}_{n}^{(r)}.
\]
Therefore, to get the running-time maximum $\max_{k\ge n}\mathbf{R}^{(r)}_{k}$
for each $n\ge0$, we only need to sample the random walk from step $n$ to
$N_{n}$, because
\[
\max_{k\ge n}\mathbf{R}_{k}^{(r)}=\max\{\max_{n\le k\le N_{n}}\mathbf{R}%
_{k}^{(r)},\max_{n\ge N_{n}}\mathbf{R}_{k}^{(r)}\}=\max_{n\le k\le N_{n}%
}\mathbf{R}^{(r)}_{k}.
\]

Next we describe how to sample $N_{n}$ along with the multi-dimensional random
walks $\{\mathbf{X}_{-n}:n\ge0\}$ and $\{\mathbf{Y}_{-n}:n\ge0\}$.

\subsubsection{Simulation algorithm for the process $\{\mathbf{Y}_{-n}%
:n\ge0\}$}

\label{section-sample-Y} We first consider simulating the $c$-dimensional
random walk $\{\mathbf{Y}_{-n}:n\geq0\}$ with $\mathbf{Y}_{0}=\mathbf{0}$. For
each $j\geq1$, $E(a\mathbf{U}_{-j}-\mathbf{T}_{-j})<\mathbf{0}$, we can
simulate the running time maximum $\max_{k\geq n}\mathbf{Y}_{-k}$ jointly with
the path $\{\mathbf{Y}_{-k}:0\leq k\leq n\}$ via the method developed in
\cite{B-C}, with the following assumptions.

\textit{Assumption (A1):} There exits $\mathbf{\theta}>\mathbf{0}$,
$\mathbf{\theta}\in\mathbb{R}^{c}$ such that
\[
E\exp\left(  \mathbf{\theta}^{T}(a\mathbf{U}_{-j}-\mathbf{T}_{-j})\right)
<\infty.
\]

\textit{Assumption (A1b):} Suppose that in every dimension $i=1,\ldots,c$,
there exists $\theta^{*}\in(0,\infty)$ such that
\[
\phi_{i}(\theta^{*}):=\log E\exp\left(  \theta^{*}\left(  aI(U_{-j}%
=i)-T_{-j}\right)  \right)  =0.
\]
Because for each $j\ge1$, $aI(U_{-j}=i)-T_{-j}$ are marginally identically
distributed across $i$, so $\theta^{*}$ would work for all $i=1,\ldots,c$.

\begin{rem}
In our setting, since $\mathbf{U}_{-j}$ is bounded, assumption (A1) always
holds. Assumption (A1b) is known as Cramer's condition in the large deviations
literature and it is a strengthening of assumption (A1). We shall explain
briefly at the end of this section that it is possible to relax this
assumption to (A1) by modifying the algorithm a bit without affecting the
exactness/computational effort of the algorithm. For the moment we continue to
describe the main algorithmic idea under assumption (A1b).
\end{rem}

%\textit{Assumption (A2):} We can choose a constant $m>0$ large enough such that
%\begin{equation}\label{e:m-value-choose}
%m>\log(c)/\theta^*.
%\end{equation}

For any $\mathbf{s}\in\mathbb{R}^{c}$ and $\mathbf{b}\in\mathbb{R}^{c}_{+}$
define
\begin{align}
\label{e:go-above-time-def} &  T_{\mathbf{b}}=\inf\{n\ge0:Y_{-n}%
(i)>b(i)~~\mbox{for some }i\in\{1,\ldots,c\}\},\\
&  T_{-\mathbf{b}}=\inf\{n\ge0:Y_{-n}(i)<-b(i)~~\mbox{for all }i=1,\ldots
,c\},\\
&  P_{\mathbf{s}}(\cdot)=P(\cdot\vert\mathbf{Y}_{0}=\mathbf{s}).
\end{align}
We will use these definitions in Algorithm {\small LTGM} given in Section
\ref{sec-ltgm}.

%Given a vector ${\bf b}\in\mathbb{R}^c_+$, define
%\begin{align}\label{e:go-above-time-def}
%T_{\bf b}&\triangleq\inf\{n\ge0:Y_{-n}(i)>b(i)~~\mbox{for some }i\},\\
%T_{-\bf b}&\triangleq\inf\{n\ge0:Y_{-n}(i)<-b(i)~~\mbox{for all }i=1,\ldots,c\}.
%\end{align}

We next construct a sequence of upward and downward \textquotedblleft
milestone\textquotedblright\ events for this multidimensional random walk. The
construction is completely analogous to the classical ladder height
decomposition of one dimensional random walks, we introduce a parameter, $m$,
in order to facilitate a certain acceptance / rejection step to be explained
in the next subsection. Let
\begin{equation}
m=\lceil\log(c)/\theta^{\ast}\rceil. \label{e:m-value-choose}%
\end{equation}
Define $D_{0}=0$ and $\Gamma_{0}=\infty$. For $k\geq1$, let
\begin{align}
D_{k}  &  =\inf\{n\geq D_{k-1}\vee\Gamma_{k-1}I\left(  \Gamma_{k-1}%
<\infty\right)  :Y_{-n}(i)<Y_{-D_{k-1}}%
(i)-m~~\mbox{for all }i\},\label{e:upward-milestone}\\
\Gamma_{k}  &  =\inf\{n\geq D_{k}:Y_{-n}(i)>Y_{-D_{k}}%
(i)+m~~\mbox{for some }i\}. \label{e:downward-milestone}%
\end{align}
Note that by convention, $\Gamma_{k}I\left(  \Gamma_{k}<\infty\right)  =0$ if
$\Gamma_{k}=\infty$ for any $k\geq0$. We let $\mathbf{B}\in\mathbb{R}^{c}$,
initially set as $(\infty,\ldots,\infty)^{T}\in\mathbb{R}^{c}$, to be the
running time upper bound of process $\{\mathbf{Y}_{-n}:n\geq0\}$. Let
$\mathbf{m}=m\mathbf{f}$, where $\mathbf{f}=(1,\ldots,1)^{T}$ as defined in
Section \ref{FIFO-model}. From the construction of \textquotedblleft
milestone" events in (\ref{e:upward-milestone}) and
(\ref{e:downward-milestone}), we know that if $\Gamma_{k}=\infty$ for some
$k\geq1$, the process will never cross over the level $\mathbf{Y}_{-D_{k}%
}+\mathbf{m}$ after $D_{k}$ coordinate-wise, i.e., for $i=1,\ldots,c$,
\[
Y_{-n}(i)\leq Y_{-D_{k}}(i)+m,~~~~\forall n\geq D_{k}.
\]
Hence, in this case we update the upper bound vector $\mathbf{B}%
=\mathbf{Y}_{-D_{k}}+\mathbf{m}$.

\paragraph{Global maximum simulation}

\label{sec-ltgm}

Define
\begin{equation}
\Lambda=\inf\{D_{k}:\Gamma_{k}=\infty,k\geq1\}. \label{e:delta-def}%
\end{equation}
By the construction of \textquotedblleft milestone\textquotedblright\ events,
for all $n\geq\Lambda$
\[
\mathbf{Y}_{-n}\leq\mathbf{Y}_{-\Lambda}+\mathbf{m}<\mathbf{0}=\mathbf{Y}%
_{0}.
\]
Hence, we can evaluate the global maximum level of the process $\{\mathbf{Y}%
_{-n}:n\geq0\}$ to be
\begin{equation}
\mathbf{M}_{0}:=\max_{k\geq0}\mathbf{Y}_{-k}=\max_{0\leq k\leq\Lambda
}\mathbf{Y}_{-k}, \label{e:global-max}%
\end{equation}
and we give the detailed sampling procedure in the following algorithm. The
algorithm has elements, such as sampling from $P_{\mathbf{0}}(T_{\mathbf{m}%
}<\infty)$, which will be explained in the sequel.

\bigskip
%\noindent\rule[0.5ex]{\linewidth}{1pt}
\noindent\textbf{Algorithm} {\small LTGM}: simulate global maximum of
$c$-dimensional process $\{\mathbf{Y}_{-n}:n\ge0\}$ jointly with the sub-path
and the subsequence of ``milestone" events.\label{alg1}\newline

\noindent Input: $a\in(1/\mu,c/\lambda)$ satisfies assumption (A1b), $m$ as in
(\ref{e:m-value-choose}).

\begin{enumerate}
\item (\textit{Initialization}) Set $n=0$, $\mathbf{Y}_{0}=\mathbf{0}$,
$\mathbf{D}=[0]$, $\mathbf{\Gamma}=[\infty]$, $\mathbf{L}=\mathbf{0}$ and
$\mathbf{B}=\infty\mathbf{f}$. \label{global-max-1}

\item Generate $U\sim Unif\{1,\ldots,c\}$ and let $\mathbf{U}=(I(U=1),\ldots
,I(U=c))^{T}$. Independently sample $T\sim A$ and let $\mathbf{T}=T\mathbf{f}%
$. Set $n=n+1$, $\mathbf{Y}_{-n}=\mathbf{Y}_{-(n-1)}+a\mathbf{U}-\mathbf{T}$,
$U_{-n}=U$ and $T_{-n}= T$.\label{global-max-2}
%\begin{enumerate}
%\item $n\leftarrow n+1$,
%\item ${\bf Y}_{-n}={\bf Y}_{-(n-1)}+a{\bf U}-{\bf T}$,
%\item $U_{-n}\leftarrow U$,
%\item $T_{-n}\leftarrow T$.
%\end{enumerate}\label{global-max-2}
%independently sample $T\sim A$. Let ${\bf Y}_{-(n+1)}={\bf Y}_{-n}+a\cdot {\bf U}-{\bf T}$, where ${\bf U}=\left(I\{U=1\},\cdots,I\{U=c\}\right)^T$, ${\bf T}=\left(T,\cdots,T\right)^T$. Set $n\leftarrow n+1$, $U_{-n}\leftarrow U$ and $T_{-n}\leftarrow T$.\label{global-max-2}

\item If there is some $1\le i\le c$ such that $Y_{-n}(i)\ge L(i)-m$, then go
to Step \ref{global-max-2}; otherwise set $\mathbf{D}=[\mathbf{D},n]$ and
$\mathbf{L}=\mathbf{Y}_{-n}$.
%\begin{enumerate}
%\item ${\bf D}\leftarrow[{\bf D},n]$,
%\item ${\bf L}\leftarrow{\bf Y}_{-n}.$
%\end{enumerate}
%${\bf D}\leftarrow [{\bf D},n]$, ${\bf Level}\leftarrow{\bf Y}_{-n}$.

\item Independently sample $J\sim Ber\left(  P_{\mathbf{0}}\left(
T_{\mathbf{m}}<\infty\right)  \right)  $. \label{global-max-4}%
\label{global-max-3}

\item If $J=1$, simulate a new conditional path $\{\left(  \mathbf{y}%
_{-k},u_{-k},t_{-k}\right)  :1\le k\le T_{\mathbf{m}}\}$ with $\mathbf{y}%
_{0}=\mathbf{0}$, following the conditional distribution of $\{\mathbf{Y}%
_{-k}:0\le k\le T_{\mathbf{m}}\}$ given $T_{\mathbf{m}}<\infty$. Set
$\mathbf{Y}_{-(n+k)}=\mathbf{Y}_{-n}+\mathbf{y}_{-k}$, $U_{-(n+k)}= u_{-k}$,
$T_{-(n+k)}= t_{-k}$ for $1\le k\le T_{m}$. Set $n= n+T_{\mathbf{m}}$,
$\mathbf{\Gamma}=[\mathbf{\Gamma},n]$.
%\begin{enumerate}
%\item ${\bf Y}_{-(n+k)}\leftarrow{\bf Y}_{-n}+{\bf y}_{-k}$ for $1\le k\le T_{\bf m}$,
%\item $U_{-(n+k)}\leftarrow u_{-k}$ for $1\le k\le T_{\bf m}$,
%\item $T_{-(n+k)}\leftarrow t_{-k}$ for $1\le k\le T_{\bf m}$,
%\item $n\leftarrow n+T_{\bf m}$,
%\item ${\bf\Gamma}\leftarrow[{\bf\Gamma},n]$.
%\end{enumerate}
Go to Step \ref{global-max-2}.\label{global-max-5}
%Let ${\bf Y}_{-(n+k)}\leftarrow{\bf Y}_{-n}+{\bf y}_{-k}$, $U_{-(n+k)}\leftarrow u_{-k}$, $T_{-(n+k)}\leftarrow t_{-k}$ for $1\le k\le T_{\bf m}$. Set $n\leftarrow n+k$ and ${\bf\Gamma}=[{\bf\Gamma},n]$. Go to Step \ref{global-max-2}.\label{global-max-5}

\item If $J=0$, set $\Lambda=n$, $\mathbf{\Gamma}=[\mathbf{\Gamma},\infty]$
and $\mathbf{B}=\mathbf{L}+\mathbf{m}$.\label{global-max-6}
%\begin{enumerate}
%\item $\Delta\leftarrow n$,
%\item ${\bf\Gamma}\leftarrow[{\bf\Gamma},\infty]$.
%\end{enumerate}

%${\bf\Gamma}=[{\bf\Gamma},\infty]$ and $\Delta\leftarrow n$

\item Output $\left\{  \left(  \mathbf{Y}_{-k},U_{-k},T_{-k}\right)  :1\le
k\le\Lambda\right\}  $, $\mathbf{D}$, $\mathbf{\Gamma}$ and global maximum
$\mathbf{M}_{0}=\max_{0\le k\le\Lambda}\mathbf{Y}_{-k}$.
\end{enumerate}

%\noindent\rule[0.5ex]{\linewidth}{1pt}
\bigskip

Now we explain how to execute Steps \ref{global-max-4} and \ref{global-max-5}
in the previous algorithm. The procedure is similar to the multi-dimensional
procedure given in \cite{B-C}, so we describe it briefly here. As
$P_{\mathbf{0}}(\cdot)$ denotes the canonical probability, we let
$P^{*}_{\mathbf{0}}(\cdot)=P_{\mathbf{0}}(\cdot|T_{\mathbf{m}}<\infty)$. Our
goal is to simulate from the conditional law of $\left\{  \mathbf{Y}_{-k}:0\le
k\le T_{\mathbf{m}}\right\}  $ given that $T_{\mathbf{m}}<\infty$ and
$\mathbf{Y}_{0}=\mathbf{0}$, i.e., to simulate from $P_{\mathbf{0}}^{*}$. We
will use acceptance/rejection by letting $P^{\prime}_{\mathbf{0}}(\cdot)$
denote the proposal distribution. A typical element $\omega^{\prime}$ sampled
under $P_{\mathbf{0}}^{\prime}(\cdot)$ is of the form $\omega^{\prime
}=((\mathbf{Y}_{-k}:k\ge0),index)$, where $index\in\{1,\cdots,c\}$ and it
indicates the direction we pick to do exponential tilting; it is the
coordinate in which we change its measure to increase the chance of its
hitting the upward ``milestone" as defined in (\ref{e:downward-milestone}).
Given the value of $index$, the process $(\mathbf{Y}_{-k}:k\ge0)$ remains a
random walk. We now describe $P_{\mathbf{0}}^{\prime}$ by explaining how to
sample $\omega^{\prime}$. First,
\begin{equation}
\label{e:tilt-direction}P_{\mathbf{0}}^{\prime}\left(  index=i\right)
=w_{i}:=\frac{1}{c}.
\end{equation}
Then, conditioning on $index=i$, for every set $A\in\sigma\left(
\{\mathbf{Y}_{-k}:0\le k\le n\}\right)  $,
\begin{equation}
\label{exponential-tilting}P_{\mathbf{0}}^{\prime}\left(  A\vert
index=i\right)  =E_{\mathbf{0}}\left(  \exp\left(  \theta^{*}Y_{-n}(i)\right)
I_{A}\right)  .
\end{equation}

To obtain the induced distribution for $U$ and $T$, we study the moment
generating function induced by definition (\ref{exponential-tilting}). Given
$\mathbf{\eta}\in\mathbb{R}^{c}$ in a neighborhood of the origin,
\[
\frac{E_{\mathbf{0}}\exp\left(  \mathbf{\eta}^{T}(a\mathbf{U}-\mathbf{T}%
)+\theta^{*}e_{i}^{T}(a\mathbf{U}-\mathbf{T})\right)  }{E_{\mathbf{0}}%
\exp\left(  \theta^{*}e_{i}^{T}(a\mathbf{U}-\mathbf{T})\right)  }%
=\frac{E_{\mathbf{0}}\exp\left(  (\mathbf{\eta}+\theta^{*}e_{i})^{T}%
a\mathbf{U}\right)  }{E_{\mathbf{0}}\exp\left(  \theta^{*}e_{i}^{T}%
a\mathbf{U}\right)  }\cdot\frac{E_{\mathbf{0}}\exp\left(  -(\mathbf{\eta
}+\theta^{*}e_{i})^{T}\mathbf{T}\right)  }{E_{\mathbf{0}}\exp\left(
-\theta^{*}e_{i}^{T}\mathbf{T}\right)  }.
\]
The previous expression indicates that under $P_{\mathbf{0}}^{\prime}(\cdot)$,
$T$ and $U$ are independent. Moreover, we have
\[
E_{\mathbf{0}}\exp\left(  \theta^{*}e_{i}^{T}a\mathbf{U}\right)  =\frac
{\exp({\theta^{*}a})+c-1}{c}.
\]
Therefore,
\begin{equation}
\label{e:tilt-u-dist}P_{\mathbf{0}}^{\prime}(U=j\vert index=i)=%
\begin{cases}
\frac{\exp\left(  \theta^{*}a\right)  }{\exp\left(  \theta^{*}a\right)  +c-1}
& \text{if } j =i\\
\frac{1}{\exp\left(  \theta^{*}a\right)  +c-1} & \text{if } j\neq i
\end{cases}
.
\end{equation}
On the other hand, conditional on $index=i$, the distribution of a generic
interarrival time $T$ is obtained by exponential tilting such that
%\begin{equation}\label{e:tilt-t-dist}
%\frac{dP_0'}{dP_0}\left(T\vert index=i\right)=\frac{\theta^*a+c-1}{c}\exp\left(-\theta^*T\right).
%\end{equation}%
\begin{align}
\label{e:tilt-t-dist}dP_{\mathbf{0}}(T\vert index=i)  &  =dP_{\mathbf{0}%
}(T)\cdot\frac{\exp(-\theta^{*}T)}{E_{\mathbf{0}}\exp(-\theta^{*}%
T)}\nonumber\\
&  =dP_{\mathbf{0}}(T)\cdot\frac{\exp(a\theta^{*})+c-1}{c\exp(\theta^{*}T)},
\end{align}
where the second equation follows from assumption (A1b).

Following assumption (A1b) and because $Var(aI(U_{-j}=i)-T_{-j})>0$, by
convexity,
\begin{align*}
E_{\mathbf{0}}^{\prime}\left(  Y_{-n}(index)\right)   &  =\sum_{i=1}%
^{c}E_{\mathbf{0}}\left(  Y_{-n}(i)\exp\left(  \theta^{*}Y_{-n}(i)\right)
\right)  P_{\mathbf{0}}^{\prime}\left(  index=i\right) \\
&  =\frac{1}{c}\sum_{i=1}^{c}\frac{d\phi_{i}\left(  \theta^{*}\right)
}{d\theta}>0,
\end{align*}
so $Y_{-n}\left(  index\right)  \to\infty$ as $n\to\infty$ almost surely under
$P_{\mathbf{0}}^{\prime}(\cdot)$, hence $T_{\mathbf{m}}<\infty$ with
probability one under $P_{\mathbf{0}}^{\prime}(\cdot)$. Now, to verify that
$P_{\mathbf{0}}(\cdot)$ is a valid proposal for acceptance/rejection method,
we must verify that $dP_{\mathbf{0}}^{*}/dP_{\mathbf{0}}^{\prime}$ is bounded
by a constant, i.e.,
\begin{align*}
&  \frac{dP_{0}^{*}}{dP_{0}^{\prime}}\left(  \mathbf{Y}_{-k}:0\le k\le
T_{\mathbf{m}}\right) \\
=  &  \frac{1}{P_{0}\left(  T_{\mathbf{m}}<\infty\right)  }\times\frac{dP_{0}%
}{dP_{0}^{\prime}}\left(  \mathbf{Y}_{-k}:0\le k\le T_{\mathbf{m}}\right) \\
=  &  \frac{1}{P_{0}\left(  T_{\mathbf{m}}<\infty\right)  }\times\frac{1}%
{\sum_{i=1}^{c}w_{i}\exp\left(  \theta^{*}Y_{-T_{\mathbf{m}}}(i)\right)  }\\
\le &  \frac{1}{P_{0}\left(  T_{\mathbf{m}}<\infty\right)  }\times\frac
{c}{\exp\left(  \theta^{*}m\right)  }\\
<  &  \frac{1}{P_{0}\left(  T_{\mathbf{m}}<\infty\right)  },
\end{align*}
where the last inequality is guaranteed by (\ref{e:m-value-choose}). So,
acceptance/rejection is valid.

Moreover, the overall probability of accepting the proposal is precisely
$P_{\mathbf{0}}(T_{\mathbf{m}}<\infty)$. Thus, we not only execute Step
\ref{global-max-5}, but simultaneously also Step \ref{global-max-4}. We use
this acceptance/rejection method to replace Steps \ref{global-max-4} and
\ref{global-max-5} in Algorithm {\small LTGM} as follows: \bigskip
%\noindent\rule[0.5ex]{\linewidth}{1pt}

\begin{enumerate}
\item[4'] Sample $\left\{  \left(  \mathbf{y}_{-k},u_{-k},t_{-k}\right)  :0\le
k\le T_{\mathbf{m}}\right)  \}$ with $\mathbf{y}_{0}=\mathbf{0}$ from
$P_{0}^{\prime}\left(  \cdot\right)  $ as indicated via
(\ref{e:tilt-direction}), (\ref{e:tilt-u-dist}) and (\ref{e:tilt-t-dist}).
Sample a Bernoulli $J$ with success probability
\[
\frac{c}{\sum_{i=1}^{c}\exp\left(  \theta^{*}y_{-T_{\mathbf{m}}}(i)\right)
}.
\]

\item[5'] If $J=1$, set $\mathbf{Y}_{-(n+k)}=\mathbf{Y}_{-n}+\mathbf{y}_{-k}$,
$U_{-(n+k)}=u_{-k}$, $T_{-(n+k)}=t_{-k}$ for $1\le k\le T_{\mathbf{m}}$. Set
$n=n+T_{\mathbf{m}}$ and $\mathbf{\Gamma}=[\mathbf{\Gamma},n]$. Go to Step
\ref{global-max-2}.
\end{enumerate}

%\noindent\rule[0.5ex]{\linewidth}{1pt}
\bigskip

\paragraph{Simulate $\{\mathbf{Y}_{-n}:n\ge0\}$ jointly with ``milestone"
events}

In this section we provide an algorithm to sequentially simulate the
multi-dimensional random walk $\{\mathbf{Y}_{-n}:n\ge0\}$ along with its
downward and upward ``milestone" events as defined in
(\ref{e:upward-milestone}) and (\ref{e:downward-milestone}). We first extend
Lemma 3 in \cite{B-S} to multi-dimensional version as follows.

%{\color{blue}Meanwhile we also maintain an index list, defined as
%$$\tau^Y\triangleq\mathcal{R}\left\{n\ge0:\max_{k\ge n}{\bf Y}_{-k}={\bf Y}_{-n}\right\},$$
%i.e. set $\tau^Y$ records the indices when the process achieves its running time maximums. For any $n\ge0$, define
%\begin{align*}
%d_1(n)&\triangleq\inf\{D_j:D_j\ge n,j\ge0\},\\
%d_2(n)&\triangleq\inf\{D_k:D_k>d_1(n),\Gamma_k=\infty,k\ge0\}.
%\end{align*}

%Obviously
%$${\bf M}_{d_1(n)}\triangleq \max_{j\ge d_1(n)}{\bf Y}_{-j}=\max_{d_1(n)\le j\le d_2(n)}{\bf Y}_{-j},$$
%so for any $n\le k\le d_1(n)$, if ${\bf Y}_{-k}=\max_{k\le j\le d_2(n)}{\bf Y}_{-j}$, then we put $k$ into the list $\tau^Y$ as the process $\{{\bf Y}_{-n}:n\ge0\}$ achieves its running time maximum at step $k$.}

\begin{lemm}
\label{lm:upper-bound} Let $0<\mathbf{a}<\mathbf{b}\le\infty\mathbf{f}$
(coordinate-wise) and consider any sequence of bounded positive measurable
functions $f_{k}:\mathbb{R}_{c\times(k+1)}\to[0,\infty)$,
\begin{align*}
&  E_{0}\left(  f_{T_{-\mathbf{a}}}(\mathbf{Y}_{0},\cdots,\mathbf{Y}%
_{-T_{-\mathbf{a}}})\vert T_{\mathbf{b}}=\infty\right) \\
=  &  \frac{E_{0}\left(  f_{T_{-\mathbf{a}}}(\mathbf{Y}_{0},\cdots
,\mathbf{Y}_{-T_{-\mathbf{a}}})\cdot I(Y_{-j}(i)\le b(i),0\le j<
T_{-\mathbf{a}},1\le i\le c)\right)  \cdot P_{\mathbf{Y}_{-T_{-\mathbf{a}}}%
}(T_{\mathbf{b}}=\infty)}{P_{0}\left(  T_{\mathbf{b}}=\infty\right)  }.
\end{align*}
Therefore, if $P_{0}^{**}(\cdot):=P_{0}(\cdot\vert T_{\mathbf{b}}=\infty)$,
then
\begin{equation}
\label{e:upper-bound}\frac{dP_{0}^{**}}{dP_{0}}=\frac{I\left(  Y_{-j}(i)\le
b(i),\forall j<T_{-\mathbf{a}},1\le i\le c\right)  \cdot P_{\mathbf{Y}%
_{-T_{-\mathbf{a}}}}(T_{\mathbf{b}}=\infty)}{P_{0}(T_{\mathbf{b}}=\infty)}%
\le\frac{1}{P_{0}\left(  T_{\mathbf{b}}=\infty\right)  }.
\end{equation}

\end{lemm}

Lemma \ref{lm:upper-bound} enables us to sample a downward patch by using the
acceptance/rejection method with the nominal distribution $P_{0}$ as proposal.
Suppose our current position is $\mathbf{Y}_{-D_{j}}$ (for some $j\ge1$) and
we know that the process will never go above the upper bound $\mathbf{B}$
(coordinate-wise). Next we simulate the path up to time $D_{j+1}$. If we can
propose a downward patch $\left(  \mathbf{y}_{-1},\cdots,\mathbf{y}_{-T_{-m}%
}\right)  :=\left(  \mathbf{Y}_{-1},\cdots,\mathbf{Y}_{-T_{-\mathbf{m}}%
}\right)  $, under the unconditional probability given $\mathbf{y}%
_{0}=\mathbf{0}$ and $\mathbf{y}_{-k}\le\mathbf{m}$ for $1\le k\le
T_{-\mathbf{m}}$, then we accept it with probability $P_{0}\left(
T_{\mathbf{\sigma}}=\infty\right)  $, where $\mathbf{\sigma}=\mathbf{B}%
-\mathbf{Y}_{-D_{j}}-\mathbf{y}_{-T_{-\mathbf{m}}}$. A more efficient way to
sample is to sequentially generate $\left(  \mathbf{y}_{-1},\cdots
,\mathbf{y}_{-\Lambda}\right)  $ with $\mathbf{y}_{0}=\mathbf{0}$ as long as
$\mathbf{m}_{0}:=\max_{0\le k\le\Lambda}\mathbf{y}_{-k}\le\mathbf{m}$
coordinate-wise, then concatenate the sequence to previously sampled subpath.
We give the efficient implementation procedure in the next algorithm.\newline%
\bigskip

\noindent\textbf{Algorithm} {\small LTRW}: continue to sample the process
$\{(\mathbf{Y}_{-k},U_{-k},T_{-k}):0\le k\le n\}$ jointly with the partially
sampled ``milestone" event lists $\mathbf{D}$ and $\mathbf{\Gamma}$, until a
stopping criteria is met.\label{alg2}\newline

\noindent Input: $a$, $m$, previously sampled partial process $\{(\mathbf{Y}%
_{-j},U_{-j},T_{-j}):0\le j\le l\}$, partial ``milestone" sequences
$\mathbf{D}$ and $\mathbf{\Gamma}$, and stopping criteria $\mathcal{H}%
$.\newline(Note that if there is no previous simulated random walk, we
initialize $l=0$, $\mathbf{D}=[0]$ and $\mathbf{\Gamma}=[\infty]$.)

\begin{enumerate}
\item Set $n=l$. If $n=0$, call Algorithm {\small LTGM} to get $\Lambda$,
$\{(\mathbf{Y}_{-k},U_{-k},T_{-k}):0\le k\le\Lambda\}$, $\mathbf{D}$ and
$\mathbf{\Gamma}$. Set $n=\Lambda$.

\item While the stopping criteria $\mathcal{H}$ is not satisfied,

\begin{enumerate}
\item Call Algorithm {\small LTGM} to get $\tilde{\Lambda}$, $\{(\tilde
{\mathbf{Y}}_{-j},\tilde{U}_{-j},\tilde{T}_{-j}):0\le j\le\tilde{\Lambda}\}$,
$\tilde{\mathbf{D}}$, $\tilde{\mathbf{\Gamma}}$ and $\tilde{\mathbf{M}}_{0}%
$.\label{sim-rw-3}

\item If $\tilde{\mathbf{M}}_{0}\le\mathbf{m}$, accept the proposed sequence
and concatenate it to the previous sub-path, i.e., set $\mathbf{Y}%
_{-(n+j)}=\mathbf{Y}_{-n}+\tilde{\mathbf{Y}}_{-j}$, $U_{-(n+j)}=\tilde{U}%
_{-j}$, $T_{-(n+j)}=\tilde{T}_{-j}$ for $1\le j\le\tilde{\Lambda}$. Update the
sequences of ``milestone" events to be $\mathbf{D}=[\mathbf{D},n+\tilde
{\mathbf{D}}(2:\text{end})]$, $\mathbf{\Gamma}=[\mathbf{\Gamma},n+\tilde
{\mathbf{\Gamma}}(2:\text{end})]$ and set $n=n+\tilde{\Lambda}$.
\end{enumerate}

\item Output $\{(\mathbf{Y}_{-k},U_{-k},T_{-k}):0\le k\le n\}$ with updated
``milestone" event sequences $\mathbf{D}$ and $\mathbf{\Gamma}$.
\end{enumerate}

\bigskip

For $n\ge0$, define
\begin{align}
d_{1}(n)  &  =\inf\{D_{k}\ge n:\mathbf{Y}_{-D_{k}}\le\mathbf{Y}_{-n}\}
,\label{e:Y-d1}\\
d_{2}(n)  &  =\inf\{D_{k}> d_{1}(n):\Gamma_{k}=\infty\} , \label{e:Y-d2}%
\end{align}
and $d_{2}(n)$ is an upper bound of $N_{n}^{Y}$ defined in
(\ref{running-finite-check-Y}) because
\[
\max_{k\ge d_{2}(n)}\mathbf{Y}_{-k}\le\mathbf{Y}_{-d_{2}(n)}+\mathbf{m}%
<\mathbf{Y}_{-d_{1}(n)}\le\mathbf{Y}_{-n}.
\]

\begin{rem}
Since assumption (A1b) is a strengthening of assumption (A1), we can
accommodate our algorithms under the general assumption (A1). The
implementation details are the same as that mentioned in the remark section on
page 15 of \cite{B-C}.
\end{rem}

\subsubsection{Simulation algorithm for the process $\{\mathbf{X}_{-n}%
:n\ge0\}$}

\label{section-sample-X} Recall from (\ref{e:X-def}) that for $n\ge0$,
\[
X_{-n}(i)=\sum_{j=1}^{n}(S_{-j}-a)I(U_{-j}=i)~~~~\mbox{for }i=1,\ldots,c.
\]
Define
\begin{align}
&  N_{k}(i)=\sum_{j=1}^{k}I\left(  U_{-j}=i\right)  ,
\label{e:auxiliary-index}\\
&  L_{n}(i)=\inf\left\{  k\ge0:N_{k}(i)=n\right\}  ~~(L_{0}%
(i)=0),\label{e:universal-index}\\
&  \hat{S}_{-n}^{(i)}=S_{-L_{n}(i)} , \label{e:service-mapping}%
\end{align}
for $k\ge0$, $n\ge0$ and $i=1,\ldots,c$. $N_{k}(i)$ denotes the total number
of customers routed to server $i$ among the first $k$ arrivals counting
backwards in time. $L_{n}(i)$ denotes the index of the $n$-th customer that
gets routed to server $i$ in the common arrival stream, counting backwards in
time. $\hat{S}_{-n}^{(i)}$ denotes the service time of the $n$-the customer
that gets routed to server $i$, counting backwards in time.
%and $\hat{S}^{(i)}_{-n}$ denote the arrival index and the service time of the $k$-th customer that gets routed to server $i$.

For each $i=1,\ldots,c$, let $\{\hat{X}_{-n}^{(i)}:n\ge0\}$ with $\hat{X}%
_{0}^{(i)}=0$ be an auxiliary process
%corresponding to the process $\{X_{-n}(i):n\ge0\}$
such that
\begin{equation}
\label{aux-true-eq}\hat{X}_{-n}^{(i)}:=\sum_{j=1}^{n}\left(  \hat{S}%
_{-j}^{(i)}-a\right)  =X_{-L_{n}(i)}(i).
\end{equation}
For $n\ge0$ and $1\le i\le c$, define
\begin{equation}
\label{e:hat-N-def}\hat{N}_{n}(i)=\inf\left\{  n^{\prime}\ge N_{n}%
(i):\max_{k\ge n^{\prime}}\hat{X}_{-k}^{(i)}\le\hat{X}_{-N_{n}(i)}%
^{(i)}\right\}  ,
\end{equation}
hence by definition, in (\ref{running-finite-check-X}), we have
\begin{equation}
\label{e:stopping-X}N_{n}^{X}=\max\left\{  L_{\hat{N}_{n}(1)}(1),\ldots
,L_{\hat{N}_{n}(c)}(c)\right\}  .
\end{equation}

First we develop simulation algorithms for each of the $c$ one-dimensional
auxiliary processes $\{(\hat{X}^{(i)}_{-n}:n\ge0):1\le i\le c\}$. Next we use
the common server allocation sequence $\{U_{-n}:n\ge0\}$ (sampled jointly with
the process $\{\mathbf{Y}_{-n}:n\ge0\}$ in Section \ref{section-sample-Y})
with (\ref{e:auxiliary-index}), (\ref{e:universal-index}) and
(\ref{e:service-mapping}) to find $N_{n}^{X}$ via (\ref{e:stopping-X}) for
each $n\ge0$.

%{\color{blue}Conditioning on that information, we can separate the multidimensional process $\{{\bf X}_{-n}:n\ge 0\}$ to be $c$ processes $\{X_{-n}(i):n\ge0\}$ for $i=1,\ldots, c$.

%For each $1\le i\le c$ and $a,b\in\mathbb{N}_+$ with $a<b$, let
%\begin{equation}\label{e:sep-x}
%N^{(r)}_i(a,b)\triangleq\sum_{j=a}^bI\left(U_{-j}=i\right)
%\end{equation}
%to denotes the total number of times that a customer is routed to server $i$ from the $(-a)^{th}$ arrival time epoch to the $(-b)^{th}$ arrival time epoch backwards in time. Let $R_i(j)\triangleq\inf\{n\ge1:N_i^{(r)}(1,n)=j\}$ and $\tilde{S}^{(i)}_{-j} \triangleq S_{-R_i(j)}$ denote the arrival index and the service time of the $j$-th customer that goes to server $i$, counting backwards in time from $0$, hence
%\begin{equation}
%\max_{k\ge n}X_{-k}(i)=\max_{k\ge n}\sum_{j=1}^k\left(S_{-j}-a\right)\cdot I\left(U_{-j}=i\right)=\max_{k\ge n}\sum_{j=1}^{N_i^{(r)}(n,k)}\left(\tilde{S}_{-j}^{(i)}-a\right).
%\end{equation}

%Let $\{\tilde{X}^{(i)}_{-n}:n\ge0\}$ with $\tilde{X}^{(i)}_0=0$ be an auxiliary process that corresponds to the process $\{X_{-n}(i):n\ge0\}$ for each $i=1,\ldots,c$ such that
%$$\tilde{X}_{-n}^{(i)}\triangleq\sum_{j=1}^n\left(\tilde{S}^{(i)}_{-j}-a\right)$$
%for $n\ge0$. Note that the correspondence between $\{\tilde{X}^{(i)}_{-n^{(i)}}:n^{(i)}\ge0\}$ and $\{X_{-n}(i):n\ge0\}$ is, for any $n^{(i)}\in\mathbb{N}$,
%\begin{equation}\label{e:id-map}
%\tilde{X}^{(i)}_{-n}=X_{-R_i(n)}(i)=X_{-(R_i(n)+1)}(i)=\cdots=X_{-(R_i(n+1)-1)}(i).
%\end{equation} }

\paragraph{``Milestone" construction and global maximum simulation}

\label{sec-ggm}

For each one-dimensional auxiliary process $\{\hat{X}^{(i)}_{-n}:n\ge0\}$ with
$i=1,\ldots,c$, we adopt the algorithm developed in \cite{B-W} by choosing any
$m^{\prime}>0$ and $L^{\prime}\ge1$ properly and define the sequences of
upward and downward ``milestone" events by letting $D_{0}^{(i)}=0$,
$\Gamma_{0}^{(i)}=\infty$, and for $j\ge1$,
\begin{align}
D^{(i)}_{j}  &  =\inf\{n^{(i)}\ge\Gamma^{(i)}_{j-1}I(\Gamma_{j-1}^{(i)}%
<\infty)\vee D_{j-1}^{(i)}:\hat{X}^{(i)}_{-n^{(i)}}<\hat{X}^{(i)}%
_{-D_{j-1}^{(i)}}-L^{\prime}m^{\prime}\},\label{milestone-downward-x}\\
\Gamma^{(i)}_{j}  &  =\inf\{n^{(i)}\ge D_{j}^{(i)}:\hat{X}^{(i)}_{-n^{(i)}%
}-\hat{X}^{(i)}_{-D_{j}^{(i)}}>m^{\prime}\}, \label{milestone-upward-x}%
\end{align}
with the convention that if $\Gamma_{j}^{(i)}=\infty$, then $\Gamma_{j}%
^{(i)}I(\Gamma_{j}^{(i)}<\infty)=0$ for any $j\ge0$.

%{\color{blue}Define
%\begin{equation}\label{e:tau-i}
%\tilde{\tau}_i^X\triangleq\mathcal{R}\left\{n\ge0:\max_{k\ge n}\tilde{X}^{(i)}_{-k}=\tilde{X}^{(i)}_{-n}\right\},
%\end{equation}
%and for any $n\ge0$, let
%\begin{align*}
%d_1^{(i)}(n)&\triangleq\inf\{D_j^{(i)}:D_j^{(i)}\ge n,j\ge0\},\\
%d_2^{(i)}(n)&\triangleq\inf\{D_j^{(i)}:D_j^{(i)}>d_1^{(i)}(n), \Gamma_j^{(i)}=\infty, j\ge0\}.
%\end{align*}
%Similar as the argument in the previous section, it's easy to check that
%$$\tilde{M}_{d_1^{(i)}(n)}\triangleq\max_{j\ge d_1^{(i)}(n)}\tilde{X}^{(i)}_{-j}=\max_{d_1^{(i)}(n)\le j\le d_2^{(i)}(n)}\tilde{X}_{-j}^{(i)},$$
%hence for any $n\le k\le d_1^{(i)}(n)$, if $\tilde{X}_{-k}^{(i)}=\max_{k\le j\le d_2^{(i)}(n)}\tilde{X}_{-j}^{(i)}$, then the auxiliary process $\{\tilde{X}^{(i)}_{-n}:n\ge0\}$ achieves its running time maximum at step $k$, thus we insert $k$ into $\tilde{\tau}_i^{X}$.}

For each $i=1,\ldots,c$, define
\begin{equation}
\label{global-max-lambda-x}\Lambda^{(i)}=\inf\{D^{(i)}_{k}:\Gamma^{(i)}%
_{k}=\infty,k\ge1\}.
\end{equation}
By the ``milestone" construction in (\ref{milestone-downward-x}) and
(\ref{milestone-upward-x}), for all $n\ge\Lambda^{(i)}$,
\[
\hat{X}^{(i)}_{-n}\le\hat{X}^{(i)}_{-\Lambda^{(i)}}+m^{\prime}< 0=\hat
{X}^{(i)}_{0}.
\]
Therefore we can evaluate the global maximum of the infinite-horizon process
$\{\hat{X}^{(i)}_{-n}:n\ge0\}$ in finite steps, i.e.,
\begin{equation}
\label{global-max-x}M_{0}^{(i)}:=\max_{k\ge0}\hat{X}^{(i)}_{-k}=\max_{0\le
k\le\Lambda^{(i)}}\hat{X}^{(i)}_{-k}.
\end{equation}
We summarize the simulation details in the following algorithm.

\bigskip\noindent\textbf{Algorithm} {\small GGM}: simulate global maximum of
the one-dimensional process $\{(\hat{X}_{-n}^{(i)},\hat{S}_{-n}^{(i)}%
):n\ge0\}$ jointly with the sub-path and the subsequence of ``milestone"
events.\newline

\noindent Input: $a$, $m^{\prime}$, $L^{\prime}$.

\begin{enumerate}
\item (${\mathit{I}nitialization}$) Set $n=0$, $\hat{X}^{(i)}_{0}=0$,
$\mathbf{D}^{(i)}=[0]$, $\mathbf{\Gamma}^{(i)}=[\infty]$, $L^{(i)}=0$.

\item Generate $S\sim G$. Set $n=n+1$, $\hat{X}^{(i)}_{-n}=\hat{X}%
^{(i)}_{-(n-1)}+S$ and $\hat{S}^{(i)}_{-n}=S$. \label{ggm-2}

\item If $\hat{X}_{-n}^{(i)}\ge L^{(i)}-L^{\prime}m^{\prime}$, go to Step
\ref{ggm-2}; otherwise set $\mathbf{D}^{(i)}=[\mathbf{D}^{(i)},n]$ and
$L^{(i)}=\hat{X}_{-n}^{(i)}$.

\item Call Algorithm 1 on page 10 of \cite{B-W} and obtain $(J,\omega)$.

\item If $J=1$, set $\hat{X}^{(i)}_{-(n+l)}=L^{(i)}+\omega(l)$, $\hat{S}%
^{(i)}_{-(n+l)}=\hat{X}^{(i)}_{-(n+l)}-\hat{X}^{(i)}_{-(n+l-1)}+a$ for
$l=1,\ldots,length(\omega)$. Set $n=n+length(\omega)$, $\mathbf{\Gamma}%
^{(i)}=[\mathbf{\Gamma}^{(i)},n]$ and go to Step \ref{ggm-2}.

\item If $J=0$, set $\Lambda^{(i)}=n$, $\mathbf{\Gamma}^{(i)}=[\mathbf{\Gamma
}^{(i)},\infty]$.

\item Output $\{(\hat{X}^{(i)}_{-k},\hat{S}^{(i)}_{-k}):1\le k\le\Lambda
^{(i)}\}$, $\mathbf{D}^{(i)}$, $\mathbf{\Gamma}^{(i)}$ and global maximum
$M_{0}^{(i)}=\max_{0\le k\le\Lambda^{(i)}}\hat{X}^{(i)}_{-k}$.
\end{enumerate}

\bigskip

\paragraph{Simulate $\{\mathbf{X}_{-n}:n\ge0\}$ jointly with ``milestone"
events}

\label{sec-grw} In this section, we first explain how to sample the auxiliary
one-dimensional processes $\{\hat{X}^{(i)}_{-n}:n\ge0\}$ along with the
``milestone" events defined in (\ref{milestone-downward-x}) and
(\ref{milestone-upward-x}). Next we will need the service allocation
information $\{U_{-n}:n\ge0\}$, from the simulation procedure of process
$\{\mathbf{Y}_{-n}:n\ge0\}$, to recover the multi-dimensional process of
interest $\{\mathbf{X}_{-n}:n\ge0\}$ via Equation (\ref{aux-true-eq}).

The following algorithm gives the the sampling procedure for each auxiliary
one-dimensional process $\{\hat{X}^{(i)}_{-n}:n\ge0\}$ for $i=1,\ldots,c$. The
simulation steps are the same as the procedure given in Algorithm 3 on page 16
of \cite{B-W}.

\bigskip\noindent\textbf{Algorithm} {\small GRW}: continute to sample the
process $\{(\hat{X}^{(i)}_{-k},\hat{S}^{(i)}_{-k}):0\le k\le n\}$ jointly with
the partially sampled ``milestone" event lists $\mathbf{D}^{(i)}$ and
$\mathbf{\Gamma}^{(i)}$, until a stopping criteria is met.\newline

\noindent Input: $a$, $m^{\prime}$, $L^{\prime}$, previously sampled partial
process $\{(\hat{X}^{(i)}_{-j},\hat{S}^{(i)}_{-j}):0\le j\le l\}$, partial
``milestone" sequences $\mathbf{D}^{(i)}$ and $\mathbf{\Gamma}^{(i)}$, and
stopping criteria $\mathcal{H}^{(i)}$.\newline(Note that if there is no
previously simulated random walk, we initialize $l=0$, $\mathbf{D}^{(i)}=[0]$
and $\mathbf{\Gamma}^{(i)}=[\infty]$.)

\begin{enumerate}
\item Set $n=l$. If $n=0$, call Algorithm {\small GGM} to get $\Lambda^{(i)}$,
$\{(\hat{X}^{(i)}_{-k},\hat{S}^{(i)}_{-k}):0\le k\le\Lambda^{(i)}\}$,
$\mathbf{D}^{(i)}$ and $\mathbf{\Gamma}^{(i)}$. Set $n=\Lambda^{(i)}$.

\item While the stopping criteria $\mathcal{H}^{(i)}$ is not satisfied,

\begin{enumerate}
\item Call Algorithm {\small GGM} to get $\tilde{\Lambda}^{(i)}$,
$\{(\tilde{X}^{(i)}_{-j},\tilde{S}^{(i)}_{-j}):0\le j\le\tilde{\Lambda}%
^{(i)}\}$. $\tilde{\mathbf{D}}$, $\tilde{\mathbf{\Gamma}}$ and $\tilde{M}%
_{0}^{(i)}$.

\item If $\tilde{M}_{0}^{(i)}\le m^{\prime}$, accept the proposed sequence and
concatenate it to the previous sub-path, i.e., set $\hat{X}_{-(n+j)}%
^{(i)}=\hat{X}^{(i)}_{-n}+\tilde{X}^{(i)}_{-j}$, $\hat{S}^{(i)}_{-(n+j)}%
=\tilde{S}_{-j}^{(i)}$ for $1\le j\le\tilde{\Lambda}^{(i)}$. Update the
sequences of ``milestone" events to be $\mathbf{D}^{(i)}=[\mathbf{D}%
^{(i)},n+\tilde{\mathbf{D}}^{(i)}(2:\text{end})]$, $\mathbf{\Gamma}%
^{(i)}=[\mathbf{\Gamma}^{(i)},n+\tilde{\mathbf{\Gamma}}^{(i)}(2:\text{end})]$
and set $n=n+\tilde{\Lambda}^{(i)}$.
\end{enumerate}

\item Output $\{(\hat{X}^{(i)}_{-k},\hat{S}^{(i)}_{-k}):0\le k\le n\}$ with
updated ``milestone" event sequences $\mathbf{D}^{(i)}$ and $\mathbf{\Gamma
}^{(i)}$.
\end{enumerate}

\bigskip

With the service allocation information $\{U_{-n}:n\ge0\}$, we can construct
the $c$-dimensional process $\{\mathbf{X}_{-n}:n\ge0\}$ ($\mathbf{X}%
_{0}=\mathbf{0}$) from the auxiliary processes $\{(\hat{X}^{(i)}_{-n},\hat
{S}^{(i)}_{-n}):n\ge0\}$, $i=1,\ldots,c$. For $n\ge1$,
\begin{align}
&  S_{-n} =\hat{S}^{(U_{-n})}_{\sum_{j=1}^{n}I(U_{-j}=U_{-n})}%
,\label{service-match}\\
&  X_{-n}(i) = \left\{
\begin{array}
[c]{lr}%
X_{-(n-1)}(i) & \text{if } i\neq U_{-n}\\
X_{-(n-1)}+S_{-n}-a & \text{if } i=U_{-n}%
\end{array}
.\right.  \label{x-match}%
\end{align}

%\bigskip
%%\noindent\rule[0.5ex]{\linewidth}{1pt}
%\noindent {\bf Algorithm} {\small GRW}: continue to sample the process $\{(\hat{X}^{(i)}_{-k},\hat{S}^{(i)}_{-k}):0\le k\le n\}$ jointly with the partially sampled ``milestone" event lists ${\bf D}^{(i)}$ and ${\bf\Gamma}^{(i)}$, until $f(\{(\hat{X}^{(i)}_{-k},\hat{S}^{(i)}_{-k}):0\le k\le n\},{\bf D}^{(i)},{\bf\Gamma}^{(i)})\ge K$.\\

%%continue sample partial list $\tilde{\tau}^X_i$, partial ``milestone" event lists $\mathbf{D}^{(i)}$ and $\mathbf{\Gamma}^{(i)}$ jointly with the process $\{(\tilde{X}^{(i)}_{-n},\tilde{S}^{(i)}_{-n}):0\le n\le n_i\}$ until $f(\{(\tilde{X}_{-n}^{(i)},\tilde{S}_{-n}^{(i)}):0\le n\le n_i\},\tilde{\tau}^X_i)\ge K$.\\

%\noindent {Input:} $a$, $m'$, $L'$, previously sampled partial process $\{(\hat{X}^{(i)}_{-j},\hat{S}^{(i)}_{-j}):0\le j\le l\}$, partial ``milestone" event sequences ${\bf D}^{(i)}$ and ${\bf\Gamma}^{(i)}$. Function $f$ and threshold value $K\in\mathbb{R}$.
%%previously sampled partial process $\{(\tilde{X}_{-n}^{(i)},\tilde{S}^{(i)}_{-n}):0\le n\le l_i\}$, partial index set $\tilde{\tau}^X_i$, partial ``milestone" event sequences $\mathbf{D}$ and $\mathbf{\Gamma}$. Function $f$ and value $K$.
%Simulation steps are the same as {\bf Algorithm 3} in \cite{B-W}.
%\bigskip

By the definition of ``milestone" events in (\ref{milestone-downward-x}) and
(\ref{milestone-upward-x}), for each $n\ge0$, let
\begin{align}
d_{1}^{(i)}(n)  &  =\inf\{D_{k}^{(i)}\ge n:\hat{X}_{-D_{k}^{(i)}}^{(i)}\le
\hat{X}^{(i)}_{-n}\},\label{e:i-d1}\\
d_{2}^{(i)}(n)  &  =\inf\{D_{k}^{(i)}>d_{1}^{(i)}(n):\Gamma_{k}^{(i)}=\infty\}
. \label{e:i-d2}%
\end{align}
Since
\[
\max_{k\ge d_{2}^{(i)}(N_{n}(i))}\hat{X}^{(i)}_{-k}\le\hat{X}^{(i)}%
_{-d_{2}^{(i)}(N_{n}(i))}+m^{\prime}<\hat{X}^{(i)}_{-d_{1}^{(i)}(N_{n}(i))}%
\le\hat{X}_{-N_{n}(i)}^{(i)},
\]
we conclude that $\hat{N}_{n}(i)\le d_{2}^{(i)}(N_{n}(i))$ and hence
\[
N_{n}^{X}\le\max\{L_{d_{2}^{(1)}(N_{n}(1))}(1),\ldots,L_{d_{2}^{(c)}%
(N_{n}(c))}(c)\}.
\]

\subsubsection{Simulation algorithm for $\{\mathbf{R}_{n}^{(r)}:n\ge0\}$ and
coalescence detection}

\label{section-coalescence}

We shall combine the simulation algorithms in Section \ref{section-sample-Y}
and Section \ref{section-sample-X} for processes $\{((\hat{X}^{(i)}_{-n}%
,\hat{S}^{(i)}_{-n}):n\ge0),1\le i \le c\}$ and $\{(\mathbf{Y}_{-n}%
,U_{-n},T_{-n}):n\ge0\}$ together to exactly simulate the multi-dimensional
random walk $\{\mathbf{R}^{(r)}_{n}:n\ge0\}$ until coalescence time $N$
defined in (\ref{eq-stopping-time}). To detect the coalescence, we start from
$n=0$ to compute $d_{2}(n)$ and $d_{2}^{(i)}(N_{n}(i))$ (as defined in
(\ref{e:Y-d2}) and (\ref{e:i-d2}) respectively). If
\begin{equation}
\label{e-coalescence-Y}\max_{n\le k\le d_{2}(n)}\mathbf{Y}_{-k}=\mathbf{Y}%
_{-n},
\end{equation}
and
\begin{equation}
\label{e-coalescence-X}\max_{N_{n}(i)\le k\le d_{2}^{(i)}(N_{n}(i))}\hat
{X}^{(i)}_{-k}=\hat{X}^{(i)}_{-N_{n}(i)}%
\end{equation}
for all $i=1,\ldots,c$, we set the coalescence time $N\leftarrow n$ and stop.
Otherwise we increase $n$ by $1$ and repeat the above procedure until the
first time that (\ref{e-coalescence-Y}) and (\ref{e-coalescence-X}) are satisfied.

In the following algorithm we give the simulation procedure to detect
coalescence while sampling the time-reversed multi-dimensional process
$\{\mathbf{R}^{(r)}_{n}:n\ge0\}$.

\bigskip\noindent\textbf{Algorithm} {\small CD}: sample the coalescence time
$N$ jointly with the process $\{\mathbf{R}^{(r)}_{n}:n\ge0\}$.\newline

\noindent Input: $a$, $m$, $m^{\prime}$, $L^{\prime}$.

\begin{enumerate}
\item (\textit{Initialization}) Set $n=0$. Set $l=0$, $\mathbf{Y}%
_{0}=\mathbf{0}$, $\mathbf{D}=[0]$, $\mathbf{\Gamma}=[\infty]$. Set $l_{i}=0$,
$\hat{X}^{(i)}_{0}=0$, $\mathbf{D}^{(i)}=[0]$, $\mathbf{\Gamma}^{(i)}%
=[\infty]$ for all $i=1,\ldots,c$.

\item Call Algorithm {\small LTRW} to further sample $\{(\mathbf{Y}%
_{-j},U_{-j},T_{-j}):0\le j\le l\}$, $\mathbf{D}$ and $\mathbf{\Gamma}$ with
the stopping criteria $\mathcal{H}$ being $\sum_{j=1}^{l}I(U_{-j}=i)>l_{i}$
for all $i=1,\ldots,c$ and $\mathbf{Y}_{-\mathbf{D}(end-1)}\le\mathbf{Y}_{-n}$.

\item For each $i=1,\ldots,c$,

\begin{enumerate}
\item Set $n_{i}=\sum_{j=1}^{n}I(U_{-j}=i)$.

\item Call Algorithm {\small GRW} to further sample $\{(\hat{X}^{(i)}%
_{-k},\hat{S}^{(i)}_{-k}):0\le k\le l_{i}\}$, $\mathbf{D}^{(i)}$ and
$\mathbf{\Gamma}^{(i)}$ with the stopping criteria $\mathcal{H}^{(i)}$ being
$\sum_{j=1}^{l}I(U_{-j}=i)\le l_{i}$ and $\hat{X}^{(i)}_{-\mathbf{D}%
^{(i)}(end-1)}\le\hat{X}^{(i)}_{-n_{i}}$.
\end{enumerate}

\item If $\max_{n\le k\le\mathbf{D}(end)}\mathbf{Y}_{-k}\le\mathbf{Y}_{-n}$
and $\max_{n_{i}\le k\le\mathbf{D}^{(i)}(end)}\hat{X}^{(i)}_{-k}\le\hat
{X}^{(i)}_{-n_{i}}$ for all $i=1,\ldots,c$, go to next step. Otherwise set
$n=n+1$ and go to Step 2.

\item For $1\le k\le n$, recover $S_{-k}$ and $\mathbf{X}_{-k}$ from the
auxiliary processes via Equations (\ref{service-match}) and (\ref{x-match}).

\item Output coalescence time $N=n$, the sequence $\{(U_{-k},T_{-k}%
,S_{-k}):0\le k\le n\}$ and process $\{\mathbf{R}^{(r)}_{k}:0\le k\le n\}$.
\end{enumerate}

\bigskip

%\begin{enumerate}
%\item \textit{Initialization} ${\bf Y}_0={\bf0}$, $\mathbf{D}=[0]$, $\mathbf{\Gamma}=[\infty]$, $\tau^Y=[]$ and $l=0$; for each $i=1,\ldots,c$, $\tilde{X}^{(i)}_{0}=0$, $\mathbf{D}^{(i)}=[0]$, $\mathbf{\Gamma}^{(i)}=[\infty]$, $\tilde{\tau}^X_i=[]$, $l_i=0$.
%\item For $i=1,\ldots,c$, call $\textbf{Algorithm 3}$ with $f=I(n_i>l_i)$ and $K=1$ to get updated partial path $\{\tilde{X}^{(i)}_{k}:0\le k\le n_i\}$, partial index set $\tilde{\tau}^X_i$ and partial ``milestone" event sequences $\mathbf{D}^{(i)}$ and $\mathbf{\Gamma}^{(i)}$.
%\item Call $\textbf{Algorithm 2}$ with $f=I(n\ge\sum_{i=1}^cn_i)$ and $K=1$ to get updated partial path $\{(\mathbf{Y}_{-k},U_{-k},T_{-k}):0\le k\le n\}$, partial index set $\tau^Y$ and partial ``milestone" event sequence $\mathbf{D}$ and $\mathbf{\Gamma}$.
%\item For $i=1,\ldots,c$, set $\tau^X_i$ as indicated in (\ref{e:x-running-max-map}).
%\item If $\tau^Y\cap(\cap_{i=1}^c\tau^X_i)=\emptyset$, set $l = n$ and $l_i=n_i$ for $1\le i\le c$, then go to Step 2.
%\item Otherwise, set $N=\min\{k:k\in\tau^Y\cap(\cap_{i=1}^c\tau^X_i)\}$. Output $\{(U_{-k},T_{-k},S_{-k}):0\le k\le N\}$ and $N$.
%\end{enumerate}
%\noindent\rule[0.5ex]{\linewidth}{1pt}

\subsection{Proofs}

\label{proofs}

\begin{proof}
[Proof of Proposition \ref{alg1-prop}] Firstly, $E(N)<\infty$ holds true under
assumptions $\rho<c$ and $P(T>S)>0$ (proved in \cite{KS-1988}). Next we shall
prove the computational effort $\tau$ has finite expectation as well.

For $n\ge0$, we have $N_{n}^{X}$, $N_{n}^{Y}$ and $N_{n}$ defined in Equations
(\ref{running-finite-check-X} - \ref{running-finite-check}) such that
\[
\max_{k\ge N_{n}}\mathbf{R}^{(r)}_{k}\le\max_{k\ge N_{n}}\mathbf{X}_{-k}%
+\max_{k\ge N_{n}}\mathbf{Y}_{-k}\le\mathbf{X}_{n}+\mathbf{Y}_{n}%
=\mathbf{R}_{n}^{(r)}.
\]
Therefore, in order to evaluate the running-time maximum over the infinite
horizon $\max_{k\ge n}\mathbf{R}_{k}^{(r)}$, it only requires sampling from
$n$ to $N_{n}$ backwards in time, i.e.,
\[
\max_{k\ge n}\mathbf{R}_{k}^{(r)}=\max\{\max_{n\le k\le N_{n}}\mathbf{R}%
_{k}^{(r)},\max_{k\ge N_{n}}\mathbf{R}_{k}^{(r)}\}=\max_{n\ge k\le N_{n}%
}\mathbf{R}_{k}^{(r)}.
\]
An easy upper bound for $\tau$ is given by $\tilde{\tau}=\sum_{n=0}^{N}N_{n}$.
By Wald's identity, it suffices to show that $E(N_{n})<\infty$ for any $n\ge0$.

By the ``milestone" events construction for multi-dimensional process
$\{\mathbf{Y}_{-n}:n\ge0\}$ in (\ref{e:upward-milestone}),
(\ref{e:downward-milestone}) and because $d_{2}(n)$ is an upper bound of
$N_{n}^{Y}$, $E(N_{n}^{Y})\le E(d_{2}(n))<\infty$ follows directly from
elementary properties of compound geometric random variables (see Theorem 1 of
\cite{B-C}).

For the other process $\{\mathbf{X}_{-n}:n\ge0\}$, we simulate each of its $c$
entries separately, i.e., $\{\{\hat{X}^{(i)}_{-n}:n\ge0\}:1\le i\le c\}$ in
Section \ref{section-sample-X}. Equation (\ref{e:stopping-X}) gives
\[
N_{n}^{X}=\max\{L_{\hat{N}_{n}(1)}(1),\ldots,L_{\hat{N}_{n}(c)}(c)\}\le
\sum_{i=1}^{c}L_{\hat{N}_{n}(i)}(i)
\]
where $\hat{N}_{n}(i)$ is defined in (\ref{e:hat-N-def}). By Theorem 2.2 of
\cite{B-W}, $E(\hat{N}_{n}(i))<\infty$. Because
\[
L_{\hat{N}_{n}(i)}(i)=\inf\{k\ge0:\sum_{j=1}^{k}I(U_{-j}=i)=\hat{N}%
_{n}(i)\}\sim NegBinomial\left(  \hat{N}_{n}(i);1-\frac{1}{c}\right)  +\hat
{N}_{n}(i),
\]
hence
\[
E(L_{\hat{N}_{n}(i)}(i))=(c-1)E(\hat{N}_{n}(i))+E(\hat{N}_{n}(i))=cE(\hat
{N}_{n}(i))<\infty,
\]
and
\[
E(N_{n}^{X})\le\sum_{i=1}^{c}E(L_{\hat{N}_{n}(i)}(i))<\infty.
\]

Therefore
\[
E(N_{n})\le E(N_{n}^{X})+E(N_{n}^{Y})<\infty.
\]

\end{proof}

\begin{proof}
[Proof of Proposition \ref{alg2-prop}] By Wald's identity, it suffices to show
that $E(\kappa^{*}_{+})<\infty$ because $E(T)<\infty$. Next we only provide a
proof outline here since it follows the same argument as in the proof of
Proposition 3 in \cite{B-D-P}.

Firstly, we construct a sequence of events $\{\Omega_{k}:k\ge1\}$ which leads
to the occurrence of $\kappa^{*}_{+}$. Secondly, we split the process
$\{\mathbf{W}_{0}^{u}(t_{n}):n\ge0\}$ into cycles with bounded expected cycle
length. We also ensure the probability that the event happens during each
cycle is bounded from below by a constant, which allows us to bound the number
of cycles we need to check before finding $\kappa^{*}_{+}$ by a geometric
random variable. Finally we could establish an upper bound for $E(\kappa
^{*}_{+})$ by applying Wald's identity again.
\end{proof}

\textbf{Acknowledgement:} Support from NSF through grant CMMI-1538217 is
gratefully acknowledged.

\end{document}